\documentclass{amsart}
\usepackage{amssymb,mathtools}
\usepackage[british]{babel}
\usepackage{enumitem}
\usepackage[latin1]{inputenc}
\usepackage{url}
\usepackage{hyperref}

\newtheorem{theorem}{Theorem}
\numberwithin{theorem}{section}
\newtheorem{corollary}[theorem]{Corollary}
\newtheorem{lemma}[theorem]{Lemma}
\newtheorem{proposition}[theorem]{Proposition}
\theoremstyle{definition}
\newtheorem{definition}[theorem]{Definition}
\newtheorem{remark}[theorem]{Remark}

\DeclarePairedDelimiter{\ceil}{\lceil}{\rceil}
\newcommand\norm[1]{\left\lVert#1\right\rVert}

\title[Bounds for a nonlinear ergodic theorem for Banach spaces]{Bounds for a nonlinear ergodic theorem\\ for Banach spaces}
\author{Anton Freund and Ulrich Kohlenbach}

\address{Department of Mathematics, Technical University of Darmstadt, Schloss\-garten\-str.~7, 64289~Darmstadt, Germany}
\email{\{freund,kohlenbach\}@mathematik.tu-darmstadt.de}

\begin{document}

\begin{abstract}
We extract quantitative information (specifically, a rate of metastability in the sense of Terence Tao) from a proof due to Kazuo Kobayasi and Isao Miyadera, which shows strong convergence for {C}es\`aro means of non\-expansive maps on {B}anach spaces.
\end{abstract}

\keywords{Nonlinear ergodic theorem, Banach space, Quantitative analysis, Proof mining, Metastability}
\subjclass[2020]{47H10, 03F10}

\maketitle

\section{Introduction}

Throughout this paper, we assume that $X$ is a uniformly convex real Banach space, and that $C\subseteq X$ is non-empty, closed and convex. Furthermore, we assume that the map $T:C\to C$ has a fixed point and is nonexpansive (i.\,e., that we have $\norm{Tx-Ty}\leq\norm{x-y}$ for all~$x,y\in C$). By
\begin{equation*}
S_nx:=\frac1{n}\sum_{i=0}^{n-1}T^ix
\end{equation*}
we denote the {C}es\`aro means with respect to~$T$. Our aim is a quantitative version of the following result due to K.~Kobayasi and I.~Miyadera:

\begin{theorem}[{\cite{KobMiy}}]\label{thm:KobMiy}
Consider $T:C\to C$  as above. Given $x\in C$, assume that the sequences $(\norm{T^nx-T^{n+i}x})_n$ converge uniformly in~$i$. We then have
\begin{equation*}
\lim_{n\to\infty}\norm{y-S_nT^kx}=0\quad\text{uniformly in~$k$},
\end{equation*}
for some fixed point~$y$ of~$T$.
\end{theorem}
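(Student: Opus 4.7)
My plan is to argue in three stages: first, distill from the hypothesis a quantitative form of asymptotic regularity; second, use uniform convexity of $X$ to show that $(S_n T^k x)_n$ is Cauchy at a rate uniform in $k$; third, identify the common limit as a fixed point of $T$. For the first stage, nonexpansiveness makes $(\|T^n x - T^{n+i} x\|)_n$ nonincreasing in $n$, so it converges pointwise to some $\phi(i)$, and the hypothesis sharpens this to uniform convergence in $i$. The fixed-point assumption gives $\|T^n x - p\|\leq\|x-p\|$ for any fixed point $p$ and all $n$, so the orbit sits inside a ball around $p$ whose effective radius $\|T^n x - p\|$ is nonincreasing in $n$ and converges to some $d\geq 0$.

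The second stage is the heart of the argument. I would apply the standard uniform-convexity lemma---if $\|u\|,\|v\|\leq r$ and $\|\tfrac{1}{2}(u+v)\|\geq r-\delta$ then $\|u-v\|\leq\varepsilon$, with $\delta = \delta(\varepsilon,r)$ determined by the modulus of convexity---to the shifted vectors $u = S_n T^k x - p$ and $v = S_m T^k x - p$. Each is itself an average of terms $T^{k+i}x - p$ of norm at most $\|T^k x - p\|$, and the quantitative regularity of Stage~1 lets one force the midpoint's norm close enough to the common bound that the lemma applies. Because nonexpansiveness is shift-invariant and the rate provided by the hypothesis does not depend on $k$, the resulting estimate on $\|u-v\|$ is uniform in $k$. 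This yields the desired uniform-Cauchy property, and we write $y_k$ for the limit $\lim_n S_n T^k x$.

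The third stage verifies both that all the $y_k$ agree and that their common value $y$ is fixed by $T$. For the first point, the telescoping identity $n(S_n T^{k+1} x - S_n T^k x) = T^{n+k} x - T^k x$ bounds $\|S_n T^{k+1} x - S_n T^k x\|$ by $2\|x - p\|/n$, so $y_{k+1} = y_k$; call the common value $y$. For $Ty = y$, I would combine continuity of $T$, the averaged nonexpansive estimate $\|T S_n T^k x - S_n T^{k+1} x\| \leq \tfrac{1}{n}\sum_i \|S_n T^k x - T^{k+i} x\|$, and the uniform-in-$k$ convergence from Stage~2 to pass to the limit on both sides. The principal obstacle lies in Stage~2: the uniform-convexity estimate must be arranged so that all constants depend only on the rate in the hypothesis and not on~$k$, and it is precisely this careful quantitative bookkeeping---together with making $\phi(i)$ effectively accessible---that enables the extraction of the explicit rate of metastability announced in the abstract.
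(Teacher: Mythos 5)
There is a genuine gap, and it sits at the centre of both Stage~2 and Stage~3: you never control the \emph{nonlinearity} of $T$ on averages, which is exactly the ingredient the Kobayasi--Miyadera proof (and this paper's analysis) obtains from Bruck's ``type $(\gamma)$''/convex-approximation inequality, i.e.\ an estimate of the form~(\ref{eq:bound-nonlinearity}) applied to Ces\`aro means (Lemma~1 of \cite{KobMiy}; Section~\ref{sect:bound_nonlinearity} and Lemma~\ref{lem:KobMiy-Lem1} above). In Stage~2 you propose to apply uniform convexity to $u=S_nT^kx-p$, $v=S_mT^kx-p$ with the radius $r$ coming from $\norm{T^kx-p}$, claiming the hypothesis ``forces the midpoint's norm close to the common bound''. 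It does not: already for a rotation of the Euclidean plane about the origin (nonexpansive, fixed point $0$, and asymptotically isometric since it is an isometry) one has $\norm{S_nx}\to 0$ while $\norm{T^kx}=\norm{x}$, so $\norm{u},\norm{v}$ are far below $r$ and no lower bound on the midpoint is available at that scale. The actual argument is quite different: one first proves (Lemma~2 of \cite{KobMiy}) that $\norm{S_nT^nx-f}$ converges for a fixed point $f$, and then bounds $\norm{\tfrac12(y_m+y_n)}$ from below by comparing it with the \emph{shifted} means $y_{m+k},y_{n+k}$; this comparison is only possible because the quantity $\beta^l_{m,n}$, measuring the failure of $T^l$ to commute with the averaging, tends to $0$ uniformly in $l$ --- and that is precisely what Bruck's inequality, fed with the asymptotic-isometry hypothesis, delivers. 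Your sketch contains no substitute for this step, and the asymptotic-isometry hypothesis alone, used only through monotonicity of $\norm{T^nx-p}$, cannot produce it.

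Stage~3 has the same missing ingredient in a more visible form. The bound you propose, $\norm{TS_nT^kx-S_nT^{k+1}x}\le\frac1n\sum_i\norm{S_nT^kx-T^{k+i}x}$, is a valid inequality but its right-hand side does not tend to $0$: in the rotation example it tends to $\norm{x}>0$, since the orbit points stay at distance $\approx\norm{x}$ from the limit of the averages. So you cannot conclude $Ty=y$ this way. In the paper (and in \cite{KobMiy}) the fixed-point property of the limit $y$ of the diagonal sequence $(S_nT^nx)$ is obtained from $\norm{T^lS_nT^nx-S_nT^nx}\le\beta^l_n+l\cdot b/n$, again relying on the uniform smallness of $\beta^l_n$; the final uniform-in-$k$ convergence of $S_nT^kx$ to $y$ then uses one more averaging argument as in the proof of Theorem~\ref{thm:Phi}. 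Your telescoping identity in Stage~3 is correct but only shows that the limits $y_k$ would coincide \emph{if} they existed, which brings one back to the unproved Stage~2. In short: without an analogue of the inequality~(\ref{eq:bound-nonlinearity}) (or some other device converting asymptotic isometry into almost-commutation of $T^l$ with Ces\`aro averaging), both the Cauchy property and the identification of the limit as a fixed point remain unestablished.
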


Let us discuss what a quantitative version of the conclusion should look like. The proof by Kobayasi and Miyadera shows that the fixed point~$y$ from the theorem is the limit of the sequence~$(S_mT^mx)$. Hence we obtain
\begin{align}
\lim_{m,n\to\infty}\norm{S_mT^mx-S_nT^kx}&=0\quad\text{uniformly in~$k$},\label{eq:SnTk-univ-conv}\\
\lim_{n\to\infty}\norm{T^lS_nT^nx-S_nT^nx}&=0\quad\text{uniformly in~$l$}.\label{eq:SnTn-FP}
\end{align}
To see that we get (\ref{eq:SnTn-FP}), note that $Ty=y$ and the fact that~$T$ is nonexpansive yield
\begin{equation*}
\norm{T^lS_nT^nx-S_nT^nx}\leq\norm{T^lS_nT^nx-T^ly}+\norm{y-S_nT^nx}\leq 2\cdot\norm{y-S_nT^nx}.
\end{equation*}
Indeed, the conjunction of (\ref{eq:SnTk-univ-conv}) and (\ref{eq:SnTn-FP}) is equivalent to the conclusion of Theorem~\ref{thm:KobMiy} together with the result that~$(S_nT^nx)$ converges to a fixed point of~$T$. With respect to~(\ref{eq:SnTn-FP}), we note that uniformity in~$l$ is somewhat trivial in the presence of an actual fixed point $y$. It is less trivial when only approximate fixed points are available, and this will play a role in our quantitative analysis (cf.~the formulation of~(\ref{eq:SnTn-FP-metastab}) below). Let us also point out that statements (\ref{eq:SnTk-univ-conv}) and~(\ref{eq:SnTn-FP}) entail
\begin{equation}\label{eq:S_nx-asymptotic-reg}
\lim_{n\to\infty}\norm{TS_nT^kx-S_nT^kx}=0\quad\text{uniformly in~$k$}.
\end{equation}
This asymptotic regularity result is due to R.~Bruck~\cite{Bruck79} (in the Banach space case).

What, then, should a quantitative version of (\ref{eq:SnTk-univ-conv}) assert? As a first idea, we might look for a rate of convergence, i.\,e., for a function $\varepsilon\mapsto N(\varepsilon)$ with
\begin{equation*}
\norm{S_mT^mx-S_nT^kx}<\varepsilon\quad\text{for all $m,n\geq N=N(\varepsilon)$ and all $k\in\mathbb N$}.
\end{equation*}
In particular, $\varepsilon\mapsto N(\varepsilon/2)$ would be a Cauchy rate for the sequence~$(S_nx)$. It is known that such a rate cannot be computed (or given by a ``simple closed expression") in general (see~\cite[Theorem~5.1]{Avigad-Gerhardy-Towsner}). However, we will be able to construct a rate of metastability, i.\,e., a map $(\varepsilon,g,h)\mapsto\Phi(\varepsilon,g,h)$ that takes an $\varepsilon>0$ and functions $g,h:\mathbb N\to\mathbb N$ as input and ensures that we have
\begin{multline}\label{eq:SnTk-univ-metastab}
\norm{S_mT^mx-S_nT^kx}<\varepsilon\quad\text{for some $N\leq\Phi(\varepsilon,g,h)$ and}\\ \text{all $m,n\in[N,N+g(N)]$ and $k\leq h(N)$}.
\end{multline}
Note that this is still as strong as~(\ref{eq:SnTk-univ-conv}) from above: for if the latter fails, there is an $\varepsilon>0$ such that any $N$ admits $m,n\geq N$ and $k\in\mathbb N$ with $\norm{S_mT^mx-S_nT^kx}\geq\varepsilon$. If we set $g(N):=\max\{m,n\}-N$ and $h(N):=k$ for such numbers, (\ref{eq:SnTk-univ-metastab}) must fail. Similarly, our quantitative analysis of~(\ref{eq:SnTn-FP}) will yield a map $(\varepsilon,g,h)\mapsto\Psi(\varepsilon,g,h)$ with
\begin{multline}\label{eq:SnTn-FP-metastab}
\norm{T^lS_nT^nx-S_nT^nx}<\varepsilon\quad\text{for some $N\leq\Psi(\varepsilon,g,h)$ and}\\ \text{all $n\in[N,N+g(N)]$ and $l\leq h(N)$}.
\end{multline}
Following the structure of Kobayasi and Miyadera's proof, we will first construct~$\Psi$ and use it to define~$\Phi$. However, it will then turn out that one can switch to~$\Psi:=\Phi$, and that all desired properties can be satisfied simultaneously: there is a number $N\leq\Phi(\varepsilon,g,h)$ such that all $m,n\in[N,N+g(N)]$ and $k,l\leq h(N)$ validate both $\norm{S_mT^mx-S_nT^kx}<\varepsilon$ and $\norm{T^lS_nT^nx-S_nT^nx}<\varepsilon/2$. From these two bounds we get $\norm{T^lS_nT^k-S_nT^kx}<5\varepsilon/2$, which provides quantitative information about the asymptotic regularity result~(\ref{eq:S_nx-asymptotic-reg}) due to Bruck (see Corollary~\ref{cor:simultaneous-1to3} for all this). We have mentioned that one cannot expect a computable rate of convergence (rather than metastability) for~(\ref{eq:SnTk-univ-conv}). It is not clear whether rates of convergence are available in the case of (\ref{eq:SnTn-FP}) or~(\ref{eq:S_nx-asymptotic-reg}). The proof by Kobayasi and Miyadera~\cite{KobMiy} does not seem to yield such rates, while Bruck's~\cite{Bruck79} proof of~(\ref{eq:S_nx-asymptotic-reg}) remains to be analyzed. In the case of Hilbert space, a rate of convergence for~(\ref{eq:S_nx-asymptotic-reg}) is known (see~\cite[Lemma~3.4]{kohlenbach_baillon}).

The term ``metastability" for statements such as (\ref{eq:SnTk-univ-metastab}) and (\ref{eq:SnTn-FP-metastab}) has been coined by T.~Tao~\cite{tao-blog-book}. Even before Tao had introduced this terminology, the notion had been studied in mathematical logic, in particular in the proof mining program (see the textbook~\cite{kohlenbach-book}), with foundational work reaching back to K.~G\"odel. One interesting aspect of metastability is its connection with the number of $\varepsilon$-fluctuations (see the results by J.~Avigad and J.~Rute~\cite[Section~5]{Avigad-Rute}). Both experience and general metatheorems from logic (cf.~the end of this section) show that rates of metastability can be extracted from a wide range of mathematical proofs. Specifically, the present paper complements quantitative work on nonlinear non\-expansive operators that satisfy a 
condition due to Wittmann \cite{wittmann90} (cf.~Section~\ref{sect:asymp-isom-Hilbert}), notably by P.~Safarik~\cite{safarik12} (convergence of Ces\`aro means in Hilbert spaces) and the second author~\cite{kohlenbach-odd-operators} (convergence of iterates of asymptotically regular maps in Banach spaces). More generally, there is a wealth of quantitative results on the convergence of various iteration schemes in Hilbert, Banach and more general spaces (see e.\,g.\ \cite{Avigad-Gerhardy-Towsner,Kohlenbach-Leustean-09} for the linear and \cite{kohlenbach-mann-iterates,kohlenbach_browder-wittmann,kohlenbach_baillon,kohlenbach-PPA-2021,Kohlenbach-Leustean-EMS,kohlenbach-sipos-sunny,powell_banach19} for the nonlinear case).

As explained above, our quantitative analysis of Theorem~\ref{thm:KobMiy} (due to Kobayasi and Miyadera~\cite{KobMiy}) consists in the construction of maps $\Phi$ and $\Psi$ as in (\ref{eq:SnTk-univ-metastab}) and~(\ref{eq:SnTn-FP-metastab}). In the following, we specify the quantitative data that we consider as given. First, we assume that we have a bound~$b>0$ with
\begin{equation}\label{eq:bound-b-FP}
C\subseteq B_{b/2}:=\left\{x\in X\,\left|\,\norm{x}\leq\frac{b}{2}\right.\right\}.
\end{equation}
Note that such a bound exists if, and only if, $C$ has bounded diameter (specifically (\ref{eq:bound-b-FP}) yields $\norm{x-y}\leq b$ for $x,y\in C$). The version of Theorem~\ref{thm:KobMiy} for bounded~$C$ is not actually weaker: To obtain the full theorem, recall the assumption that~$T$ has a fixed point~$f$. For $x\in C$ and $r:=\norm{x-f}$, the set $D:=\{x\in C\,|\,\norm{x-f}\leq r\}\cap C$ is closed and convex with $T(D)\subseteq D\supseteq\{x,f\}$. In view of~$D\subseteq B_{r+\norm{f}}$, we can conclude by the seemingly weaker version of Theorem~\ref{thm:KobMiy}. In the context of Lemma~\ref{lem:co-p_approx} we will need a bound as in~(\ref{eq:bound-b-FP}), not just a bound on the diameter.

Secondly, to witness our standing assumption on the Banach space~$X$, we assume as given a modulus $\eta:(0,2]\to (0,1]$ of uniform convexity, so that we have
\begin{equation}\label{eq:modulus-unif-conv}
\norm{\frac{x+y}{2}}\leq 1-\eta(\varepsilon)\quad\text{for $\norm{x},\norm{y}\leq 1$ with $\norm{x-y}\geq\varepsilon$}.
\end{equation}
Sometimes it is convenient to have $\eta$ defined on $[0,\infty)$ with values 
in $[0,\infty)$ as a continuous function satisfying
\begin{equation}\label{eq:eta-convex}
0<s<t\quad\Rightarrow\quad 0=\eta(0)<\eta(s)<\eta(t)\text{ and }\frac{\eta(s)}{s}\leq\frac{\eta(t)}{t}.
\end{equation}
This form is used in Section~\ref{sect:bound_nonlinearity}, which exhibits
the quantitative content of an intermediate result by Bruck~\cite{Bruck81} and 
where it is shown 
how an arbitrary modulus as in (\ref{eq:modulus-unif-conv}) can be converted 
into a new modulus $\eta'$ with the additional properties. Conversely, given 
$\eta':[0,\infty)\to[0,\infty)$ one can simply take $\eta:(0,2]\to (0,1]$ with 
$\eta(\varepsilon):=\min\{ 1,\eta'(\varepsilon)\}$ (note in particular that (\ref{eq:modulus-unif-conv}) is trivial when $\varepsilon>2$ and false or void when $\eta(\varepsilon)>1$).  The property (\ref{eq:eta-convex}) can equivalently be expressed as 
$\eta(s)=s\cdot\tilde\eta(s)$ for a non-decreasing and continuous $\tilde\eta:[0,\infty)\to[0,\infty)$ with $\tilde\eta(s)>0$ for~$s>0$. Let us also recall that the $L^p$-spaces admit the natural modulus 
\begin{equation*}
\eta(\varepsilon)=\begin{cases}
\frac{p-1}{8}\varepsilon^2 & \text{if $1<p<2$,}\\
\frac{\varepsilon^p}{p\cdot 2^p} & \text{if $2\le p<\infty$,}
\end{cases}
\end{equation*}
for which (\ref{eq:modulus-unif-conv}) and~(\ref{eq:eta-convex}) are satisfied (see~\cite{hanner56} and also~\cite[Section~3]{Kohlenbach_Krasnoselski-Ishikawa}).

Thirdly, given that $X$ is a uniformly convex Banach space, so is $X^2$ with the norm defined by $\norm{(x,y)}_2=(\norm{x}^2+\norm{y}^2)^{1/2}$. Indeed, assume (\ref{eq:modulus-unif-conv}) holds for $(X,\norm{\cdot})$ and a modulus $\eta$ that satisfies $\eta(\varepsilon)\leq\varepsilon/4$ and $\eta(\varepsilon)=\varepsilon\cdot\tilde\eta(\varepsilon)$ with non-decreasing~$\tilde\eta$ (which 
e.g.\ follows if $\eta$ satisfies (\ref{eq:eta-convex})). An analysis of the proof in~\cite{Clarkson-convex-spaces} shows that (\ref{eq:modulus-unif-conv}) remains valid when $(X,\norm{\cdot})$ and $\eta$ are replaced by the space $(X^2,\norm{\cdot}_2)$ and the modulus $\eta_2:(0,2]\to(0,1]$ given by
\begin{equation*}
\eta_2(\varepsilon):=\delta\left(\frac{\varepsilon}{8\sqrt{2}}\cdot\eta\left(\frac{\varepsilon}{\sqrt{2}}\right)\right)\quad\text{with}\quad\delta(\varepsilon):=\frac{\varepsilon^2}{8}.
\end{equation*}
Note that $\eta_2$ satisfies (\ref{eq:eta-convex}), if $\eta$ does. Now the uniformly convex space $(X^2,\norm{\cdot}_2)$ is, in particular, $B$-convex. By a characterization due to G.~Pisier~\cite{pisier73} (cf.~the proof of~\cite[Theorem~1.1]{Bruck81}), this means that there are~$c>0$ and $q>1$ with the following property: for all independent random variables $Z_1,\ldots,Z_n$ with values in~$X^2$, the expected values satisfy
\begin{equation}\label{eq:B-convex-expectations}
\mathbb E\left(\norm{\sum_{i=1}^n Z_i}_2^q\right)\leq c^q\cdot\sum_{i=1}^n\mathbb E\left(\norm{Z_i}_2^q\right).
\end{equation}
For our quantitative analysis, we assume that we are given such $c$ and~$q$. This additional data could be avoided, i.\,e., expressed in terms of our modulus~$\eta$ of uniform convexity (as guaranteed by the metatheorem cited below). 
Indeed one can explicitly construct suitable $c,q$ in terms of 
$\eta_2$ via an analysis of the proof in~\cite{pisier73}. In fact, since 
that proof only uses that $X$ is uniformly nonsquare in the sense of 
James, it suffices to use one nontrivial value of $\eta_2$, e.g.~$\eta_2(1)$. 
In the case of $L^p$-spaces one can take $q:=\min\{ 2,p\},$ where the 
optimal constant $c$ has been computed in \cite{Haagerup} (see 
\cite[Section~9.2]{ledoux-talagrand}). We include nevertheless 
$c,q$ among our input data, because this simplifies matters and is computationally harmless: Note that $c$ and~$q$ depend on the space only. Also, the complexity class of our bounds does not depend on~$c$ and~$q$, because the latter are numbers rather than functions.

Finally, for given~$x\in C$ we abbreviate
\begin{equation*}
\alpha^i_n:=\norm{T^nx-T^{n+i}x}.
\end{equation*}
Since~$T$ is nonexpansive, we always have $\alpha^i_n\geq\alpha^i_{n+1}\geq 0$, so that each of the sequences~$(\alpha^i_n)$ converges. A central assumption of Theorem~\ref{thm:KobMiy} demands that the rate of convergence (but not necessarily the limit) is independent of the number~$i$. In terminology due to Bruck~\cite{Bruck78}, this means that~$T$ is asymptotically isometric on the set~$\{x\}$. As a quantitative version of this assumption, we suppose that we are given a rate of metastability, i.\,e., a map $(\varepsilon,g,h)\mapsto A(\varepsilon,g,h)$ that guarantees
\begin{multline}\label{eq:metastab-alpha}
\left|\alpha^i_m-\alpha^i_n\right|<\varepsilon\quad\text{for some $N\leq A(\varepsilon,g,h)$}\\ \text{and all $m,n\in[N,N+g(N)]$ and $i\leq h(N)$}.
\end{multline}
In order to apply our quantitative result, one will have to provide a map~$A$ with this property. We mention three situations where this is possible: First, assume that $T$ satisfies Wittmann's \cite{wittmann90} condition $\norm{Tx+Ty}\leq\norm{x+y}$ (which is e.\,g.\ the case when $C=-C$ and $T$ is odd in addition 
to being nonexpansive) and is asymptotically regular with given rate (\mbox{$\norm{T^{n+1}x-T^nx}\to 0$} for $n\to\infty$, which holds e.\,g.\ for averaged maps). As shown by the second author (see~\cite{kohlenbach-odd-operators} and the generalization in~\cite[Section~3]{kohlenbach-banach-geodesic-2016}), one can then construct a rate of metastability that witnesses $\norm{T^ix-T^jx}\to 0$ for $i,j\to\infty$. Such a rate is readily transformed into a map~$A$ that validates~(\ref{eq:metastab-alpha}). Secondly, the assumption that $T$ is asymptotically regular can be dropped in the Hilbert space case. Finally, one can satisfy~(\ref{eq:metastab-alpha}) when $(T^nx)$ has a convergent subsequence (even in Banach space). A quantitative analysis of the second and third situation (which are mentioned by Kobayasi and Miyadera~\cite{KobMiy}) is given in Section~\ref{sect:asymp-isom-Hilbert} of the present paper. Let us point out that all three constructions of~$A$ yield a rate of metastability rather than convergence. In this respect, it is also interesting to consider the beginning of Section~\ref{sect:fixed-points}, where a ``limsup$\,\leq\,$liminf"-argument from the proof of~\cite[Lemma~2]{KobMiy} forces us to settle for metastability.

Overall, our aim is to construct maps $\Phi$ and $\Psi$ as in (\ref{eq:SnTk-univ-metastab}) and~(\ref{eq:SnTn-FP-metastab}). These will only depend on given maps~$A,\eta$ and numbers~$b,c,q$ as in~(\ref{eq:bound-b-FP}-\ref{eq:metastab-alpha}). In addition to this quantitative data, we keep the assumption that $T:C\to C$ has a fixed point and is nonexpansive on the convex subset~$C$ of our uniformly convex Banach space. Concerning complexity, it will be straightforward to observe that all our constructions are primitive recursive in the sense of S.~Kleene (see e.\,g.~\cite[Section~3.4]{kohlenbach-book}). Let us recall that Safarik~\cite{safarik12} has previously obtained primitive recursive bounds in the case of Hilbert space.

To conclude the introduction, we return to the topic of logical metatheorems. In order to determine the precise bounds $\Phi$ and $\Psi$, it is of course necessary to consider the proof of Theorem~\ref{thm:KobMiy} in detail. However, the fact that one can extract suitable $\Phi$ and $\Psi$ is guaranteed in advance, by the second author's general result~\cite[Theorem~3.30]{kohlenbach-FA} on uniformly convex normed linear spaces. We only sketch why the latter applies (cf.~\cite[Section~3]{safarik12} for a detailed discussion in a related case): The cited metatheorem covers, roughly speaking, results of the form ``for~all--exists". A convergence statement such as (\ref{eq:SnTk-univ-conv}) does not have this form, as the existential claim (``there is an~$N$") is followed by a universal quantification (``for all~$m,n\geq N$"). On the other hand, the metastable version ``for all $\varepsilon,g,h$ there is a $\Phi(\varepsilon,g,h)$ as in~(\ref{eq:SnTk-univ-metastab})" does have the required form; here it is crucial that the quantification over $k,m,n$ and~$N$ inside~(\ref{eq:SnTk-univ-metastab}) ``does not count", because it only refers to numbers below a given bound. Similarly, the assumption associated with (\ref{eq:metastab-alpha}) contains essentially no existential quantification when we treat~$A$ as a given function (which we may assume to be a majorant, namely of the function $A^-$ providing the least $N$ satisfying~(\ref{eq:metastab-alpha})). In this situation, the cited metatheorem predicts that there are computable maps $\Phi$ and $\Psi$ that only depend on our bound~$b$ with $C\subseteq B_{b/2}$, on the given function~$A$, and on the modulus~$\eta$ of uniform convexity. Furthermore, the proof of the metatheorem suggests a general strategy for the extraction of $\Phi$ and~$\Psi$. Hence the logical background is useful in practice and interesting as a uniform explanation. At the same time, each concrete application can be presented without any reference to logic, as the following sections testify.
\begin{remark}[For logicians]
Officially, the aforementioned metatheorem requires $A$ to be a {\bf 
strong} majorant for some $A^-$ satisfying ~(\ref{eq:metastab-alpha}). 
However, 
strong majorization is only needed when dealing with proofs 
whose quantitative analysis requires so-called bar recursion,
which is not the case here, while otherwise ordinary majorization can be 
used in the monotone functional interpretation proving the metatheorem 
(see \cite{kohlenbach-book}, Remark 17.37). 
\end{remark}

\section{Nonlinearity and convex combinations}\label{sect:bound_nonlinearity}

In this section, we discuss quantitative aspects of a result due to Bruck~\cite{Bruck81}. Specifically, we construct an increasing function~$\gamma:[0,\infty)\to[0,\infty)$ such that
\begin{equation}\label{eq:bound-nonlinearity}
\gamma\left(\norm{T\left(\sum_{i=1}^n\lambda_ix_i\right)-\sum_{i=1}^n\lambda_iTx_i}\right)\leq\max_{1\leq i,j\leq n}(\norm{x_i-x_j}-\norm{Tx_i-Tx_j})
\end{equation}
holds for any convex combination $\sum\lambda_ix_i$ (i.\,e., we require $\lambda_i\geq 0$ and $\sum\lambda_i=1$). We note that most quantitative information in this section is already quite explicit in Bruck's original presentation. Nevertheless, it will be important to streamline some constructions for our purpose (cf.~the paragraph after Definition~\ref{def:gamma_n}).

Bruck first constructs functions~$\gamma=\gamma_n$ that satisfy~(\ref{eq:bound-nonlinearity}) for fixed~$n$. In a second step, he achieves independence of~$n$ by diagonalizing over these functions. A more common way to assert~(\ref{eq:bound-nonlinearity}) for $n=2$ is to say that~$T$ is of type~($\gamma$). This reveals that the case~$n=2$ coincides with~\cite[Lemma~1.1]{Bruck79}. The following makes the computational information explicit. By a standing assumption from the introduction, the function $\eta:[0,\infty)\to[0,\infty)$ is a modulus of uniform convexity for~$X$, while~$b>0$ bounds the diameter of~$C\subseteq X$, the domain of our map~$T:C\to C$.

\begin{definition}\label{def:gamma_2}
Let $\gamma_2:[0,\infty)\to[0,\infty)$ be given by $\gamma_2(t)=\min\left\{t,\frac{b}{2}\cdot\eta\left(\frac{4t}{b}\right)\right\}$.
\end{definition}

The next lemma shows that~(\ref{eq:bound-nonlinearity}) holds for $\gamma=\gamma_2$ and fixed~$n=2$. The additional properties ensure that we have a strictly increasing inverse~$\gamma_2^{-1}:[0,\infty)\to[0,\infty)$ (with $\gamma_2^{-1}(t)\geq t$ due to the minimum above), as used in the proof of Lemma~\ref{lem:Bruck2.1}. 

\begin{lemma}\label{lem:Bruck1.1}
The function~$\gamma_2$ is strictly increasing, unbounded and continuous with minimal value $\gamma_2(0)=0$. For all $x_1,x_2\in C$ and $\lambda\in[0,1]$ we have
\begin{equation*}
\gamma_2(\norm{T(\lambda x_1+(1-\lambda)x_2)-(\lambda Tx_1+(1-\lambda)Tx_2)})\leq\norm{x_1-x_2}-\norm{Tx_1-Tx_2}.
\end{equation*}
\end{lemma}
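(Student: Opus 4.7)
The plan is to handle the two parts of the lemma separately. For the properties of $\gamma_2$: the pointwise minimum of two strictly increasing, continuous, unbounded functions vanishing at $0$ has the same properties, so it suffices to check these for $t\mapsto t$ (trivial) and for $t\mapsto(b/2)\eta(4t/b)$. The latter inherits strict monotonicity, continuity, and the value $0$ at $0$ directly from~(\ref{eq:eta-convex}); unboundedness follows since $s\mapsto\eta(s)/s$ non-decreasing together with $\eta(s_0)>0$ for any $s_0>0$ forces $\eta(s)\geq(\eta(s_0)/s_0)\cdot s$ for $s\geq s_0$, giving at least linear growth.

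For the inequality, I would first dispose of the cases $\lambda\in\{0,1\}$ or $x_1=x_2$, where both sides vanish. Otherwise set $d:=\norm{x_1-x_2}>0$, $r:=\norm{Tx_1-Tx_2}$, $u:=Tx_1$, $v:=Tx_2$, $z:=T(\lambda x_1+(1-\lambda)x_2)$, and $w:=\lambda u+(1-\lambda)v$. Nonexpansiveness of $T$ gives $\norm{z-u}\leq(1-\lambda)d$ and $\norm{z-v}\leq\lambda d$, so the normalized vectors $a:=(z-u)/((1-\lambda)d)$ and $b:=(v-z)/(\lambda d)$ satisfy $\norm{a},\norm{b}\leq 1$. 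Two direct calculations yield the identities $(1-\lambda)a+\lambda b=(v-u)/d$ and $z-w=\lambda(1-\lambda)d(a-b)$.

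Next, apply the standard convex-combination strengthening of uniform convexity: writing $(1-\lambda)a+\lambda b$ as a convex combination of $(a+b)/2$ with either $a$ (if $\lambda\leq 1/2$) or $b$, one deduces from~(\ref{eq:modulus-unif-conv}) that
\[
\norm{(1-\lambda)a+\lambda b}\leq 1-2\min(\lambda,1-\lambda)\,\eta(\norm{a-b}).
\]
Combined with the first identity and the elementary bound $\min(\lambda,1-\lambda)\geq\lambda(1-\lambda)$, this weakens to $2\lambda(1-\lambda)d\cdot\eta(\norm{a-b})\leq d-r$.

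Finally, substitute $\norm{a-b}=\norm{z-w}/(\lambda(1-\lambda)d)$ from the second identity. The key observation is that $\lambda(1-\lambda)d\leq b/4$ (since $\lambda(1-\lambda)\leq 1/4$ and $d\leq b$), so $4\norm{z-w}/b\leq\norm{z-w}/(\lambda(1-\lambda)d)$, and applying the monotonicity of $s\mapsto\eta(s)/s$ from~(\ref{eq:eta-convex}) to rescale the argument downward gives
\[
\frac{b}{2}\,\eta\!\left(\frac{4\norm{z-w}}{b}\right)\leq 2\lambda(1-\lambda)d\cdot\eta\!\left(\frac{\norm{z-w}}{\lambda(1-\lambda)d}\right)\leq d-r.
\]
Since $\gamma_2(\norm{z-w})$ is by definition at most the leftmost expression, we are done. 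The main subtlety is getting the direction of both invocations of the monotonicity of $\eta$ right and verifying that $\norm{a-b}\leq 2$ so we remain in the relevant range; once $a$ and $b$ are chosen correctly, uniform convexity carries the real content and the rest is book-keeping of the normalizing factors.
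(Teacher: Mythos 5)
Your proof is correct and takes essentially the same route as the paper: the paper simply cites Bruck for the fact that the quantity (\ref{eq:Lem1.1-proof}) is at most $\norm{x_1-x_2}-\norm{Tx_1-Tx_2}$, which you rederive in a self-contained way via the convex-combination form of uniform convexity, and your final rescaling step (monotonicity of $s\mapsto\eta(s)/s$ together with $\lambda(1-\lambda)\norm{x_1-x_2}\leq b/4$) is exactly the paper's. Two cosmetic points only: your normalized vector $b$ clashes notationally with the diameter bound $b$ from (\ref{eq:bound-b-FP}), and for $\lambda\in\{0,1\}$ with $x_1\neq x_2$ the two sides do not both vanish, though the case remains trivial since the left side is $\gamma_2(0)=0$ while the right side is nonnegative by nonexpansiveness.
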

\begin{proof}
The first sentence of the lemma is immediate by the corresponding properties of~$\eta$, which hold by a standing assumption from the introduction (note that~(\ref{eq:eta-convex}) yields $\eta(t)\geq\eta(1)\cdot t$ for $t\geq 1$). For the remaining claim, we follow the proof of~\cite[Lemma~1.1]{Bruck79}. As in the latter, the value of
\begin{equation}\label{eq:Lem1.1-proof}
2\lambda(1-\lambda)\cdot\norm{x_1-x_2}\cdot\eta\left(\frac{\norm{\lambda Tx_1+(1-\lambda)Tx_2-T(\lambda x_1+(1-\lambda)x_2)}}{\lambda(1-\lambda)\cdot\norm{x_1-x_2}}\right)
\end{equation}
is smaller than or equal to~$\norm{x_1-x_2}-\norm{Tx_1-Tx_2}$ (unless $\lambda\in\{0,1\}$ or $x_1=x_2$ and the claim is trivial). Now recall the standing assumption that~$\eta$ is convex. More specifically, from~(\ref{eq:eta-convex}) we readily get $t\cdot\eta(r/t)\leq s\cdot\eta(r/s)$ for $r\geq 0$ and $0<s\leq t$. With $s=\lambda(1-\lambda)\cdot\norm{x_1-x_2}\leq b/4=t$, we see that~(\ref{eq:Lem1.1-proof}) is larger than or equal to
\begin{equation*}
\frac{b}{2}\cdot\eta\left(\frac{4\cdot\norm{\lambda Tx_1+(1-\lambda)Tx_2-T(\lambda x_1+(1-\lambda)x_2)}}{b}\right).
\end{equation*}
Hence the definition of~$\gamma_2$ (even without the minimum) is as required.
\end{proof}

We have reproduced part of the proof from~\cite{Bruck79} in order to show how the convexity of~$\eta$ is used. As promised in the introduction, we now recall how a convex modulus can be constructed.

\begin{remark}
Assume that~$\eta_0:(0,2]\to(0,1]$ is any modulus of uniform convexity for our Banach space~$X$, which means that~(\ref{eq:modulus-unif-conv}) holds with~$\eta_0$ at the place of~$\eta$. Define~$\eta_1:(0,2]\to(0,1]$ by setting
\begin{equation*}
\eta_1(\varepsilon)=\sup\{\eta_0(\varepsilon')\,|\,\varepsilon'\in(0,\varepsilon]\}.
\end{equation*}
Then~(\ref{eq:modulus-unif-conv}) does still hold with~$\eta_1$ at the place of~$\eta$ (since $\eta_1(\varepsilon)>1-\norm{(x-y)/2}$ entails $\eta_0(\varepsilon')>1-\norm{(x-y)/2}$ for some~$\varepsilon'\leq\varepsilon$). The point is that $\eta_1$ is increasing (not necessarily strictly). We also write $\eta_1:[0,\infty)\to[0,1]$ for the extension of this function by the values $\eta_1(0)=0$ and $\eta_1(\varepsilon)=\eta_1(2)$ for~$\varepsilon>2$. Then $\eta_1$ is still increasing and hence Riemann integrable. As in the proof of~\cite[Lemma~1.1]{Bruck79}, we now define~$\eta:[0,\infty)\to[0,\infty)$ by
\begin{equation*}
\eta(\varepsilon)=\frac12\cdot\int_0^{\varepsilon}\eta_1(t)\,\mathrm{d}t.
\end{equation*}
For $\varepsilon\in(0,2]$ we have $\eta(\varepsilon)\leq\varepsilon\cdot\eta_1(\varepsilon)/2\leq\eta_1(\varepsilon)$, so that (\ref{eq:modulus-unif-conv}) holds for~$\eta$. Note that this extends to all~$\varepsilon\in[0,\infty)$, by $\eta(0)=0$ and the trivial reason mentioned in the introduction. Furthermore, $\eta$ is strictly increasing and continuous. For $0<s\leq t$ we can split the integral to get $2\cdot\eta(t)\geq 2\cdot\eta(s)+(t-s)\cdot\eta_1(s)$ and then
\begin{equation*}
2\cdot(s\cdot\eta(t)-t\cdot\eta(s))\geq (t-s)\cdot(s\cdot\eta_1(s)-2\cdot\eta(s))\geq 0.
\end{equation*}
We thus have $\eta(s)/s\leq\eta(t)/t$, as required by the convexity property from~(\ref{eq:eta-convex}). In conclusion, $\eta$ satisfies all standing assumptions from the introduction. To estimate the cost of these assumptions, we observe $\eta_1(\varepsilon)\leq 2\cdot\eta(2\varepsilon)/\varepsilon$ for $\varepsilon\in(0,2]$. As noted in the introduction, the given construction is not required in the case of $L^p$-spaces with $1<p<\infty$,
where the specific $\eta$ given does already satisfy 
(\ref{eq:modulus-unif-conv}) and (\ref{eq:eta-convex}).
\end{remark}

We will see that the following functions validate (\ref{eq:bound-nonlinearity}) for arbitrary but fixed~$n$.

\begin{definition}\label{def:gamma_n}
We construct functions $\gamma_n:[0,\infty)\to[0,\infty)$ by recursion on~$n\geq 2$. The base case is provided by Definition~\ref{def:gamma_2}, while the step is given by
\begin{equation*}
\gamma_{n+1}(t)=\min\left\{\gamma_n(t),\gamma_2\left(\frac{\gamma_n(t/2)}{3}\right)\right\}.
\end{equation*}
\end{definition}

Let us point out that we do not make the functions~$\gamma_n$ convex, as this property will not be needed beyond the proof of Lemma~\ref{lem:Bruck1.1}. This allows us to give a somewhat simpler definition than Bruck. One other point is important: The proof of~\cite[Lemma~2.1]{Bruck81} suggests a recursive construction of~$\gamma_n^{-1}$ rather than~$\gamma_n$. We will consider inverses in the verification below, but we have avoided them in the construction itself, because this gives more control on the complexity of our bounds. Indeed, a function and its inverse do not generally belong to the same complexity class. As an example for logicians, we mention that the function~$F_{\varepsilon_0}^{-1}$ in~\cite{freund-proof-length} is primitive recursive while~$F_{\varepsilon_0}$ is not.

\begin{lemma}\label{lem:Bruck2.1}
The functions~$\gamma_n$ are strictly increasing, unbounded and continuous with $\gamma_n(0)=0$. For each fixed~$n\geq 2$, inequality~(\ref{eq:bound-nonlinearity}) is valid for~$\gamma=\gamma_n$.
\end{lemma}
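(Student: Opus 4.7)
The first sentence follows by a simple induction on~$n$: Lemma~\ref{lem:Bruck1.1} covers the base case~$n=2$, and the step preserves the listed properties because the pointwise minimum of two strictly increasing, continuous, unbounded functions $[0,\infty)\to[0,\infty)$ that vanish at~$0$ retains all four properties. Both $\gamma_n$ and the composition $t\mapsto\gamma_2(\gamma_n(t/2)/3)$ are of this form by the inductive hypothesis and by Lemma~\ref{lem:Bruck1.1}.

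The main inequality will be proved by induction on~$n\geq 2$, with base case again Lemma~\ref{lem:Bruck1.1}. For the step, set $v:=\sum_{i=1}^{n+1}\lambda_ix_i$, $w:=\sum_{i=1}^{n+1}\lambda_iTx_i$, and $M:=\max_{i,j}(\|x_i-x_j\|-\|Tx_i-Tx_j\|)$. If some $\lambda_i=0$, then $v$ is already an $n$-term convex combination and the inductive hypothesis together with $\gamma_{n+1}\leq\gamma_n$ closes the step. Otherwise, we split $v$ as a midpoint $v=\tfrac12z_1+\tfrac12z_2$ in which each $z_j$ is a convex combination of at most~$n$ points from $\{x_1,\ldots,x_{n+1}\}$: after reordering, take $k$ to be the least index with $\sum_{i\leq k}\lambda_i\geq 1/2$ and split $\lambda_k$ across the two sides so that each side has total weight~$1/2$. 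For $2\leq k\leq n$ both halves have at most~$n$ terms; the boundary cases $k\in\{1,n+1\}$ (in which some single $\lambda_i$ exceeds~$1/2$) reduce to the analogous unbalanced split $v=\lambda_ix_i+(1-\lambda_i)y$ with $y$ an $n$-term convex combination, handled by a parallel application of Lemma~\ref{lem:Bruck1.1} and the inductive hypothesis.

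Let $w_j$ denote the convex combination of $Tx_i$'s analogous to~$z_j$, so that $w=\tfrac12(w_1+w_2)$. Applying Lemma~\ref{lem:Bruck1.1} to the midpoint decomposition of~$v$, and the inductive hypothesis to each~$z_j$, yields
\begin{equation*}
\gamma_2(\|Tv-\tfrac12(Tz_1+Tz_2)\|)\leq\delta\quad\text{and}\quad\|Tz_j-w_j\|\leq\gamma_n^{-1}(M)\ \text{for}\ j=1,2,
\end{equation*}
where $\delta:=\|z_1-z_2\|-\|Tz_1-Tz_2\|$; the triangle inequality then gives $\|Tv-w\|\leq\gamma_2^{-1}(\delta)+\gamma_n^{-1}(M)$.

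To align this estimate with the recursion $\gamma_{n+1}(t)=\min\{\gamma_n(t),\gamma_2(\gamma_n(t/2)/3)\}$, we case-split on the size of $t:=\|Tv-w\|$ relative to~$\gamma_n^{-1}(M)$. If $t/2\leq\gamma_n^{-1}(M)$, then $\gamma_n(t/2)\leq M\leq 3\gamma_2^{-1}(M)$ (using $\gamma_2^{-1}(s)\geq s$), so $\gamma_2(\gamma_n(t/2)/3)\leq M$ and the second component of the minimum yields $\gamma_{n+1}(t)\leq M$. Otherwise $\gamma_2^{-1}(\delta)\geq t/2$ and one must bound $\delta$ itself in terms of~$M$. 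This last step is the main technical obstacle: since the $z_j$ are not among the $x_i$, the bound $\delta\leq M$ is not directly available from the hypothesis on the original family. The intended estimate (of the form $\delta\leq 3\gamma_2^{-1}(M)$) is expected to arise from combining $\|Tz_1-Tz_2\|\geq\|w_1-w_2\|-2\gamma_n^{-1}(M)$ with a careful comparison of $\|z_1-z_2\|$ and $\|w_1-w_2\|$ via the representations $z_1-z_2=\sum_{i,j}\alpha_{i,j}(x_i-x_j)$ and $w_1-w_2=\sum_{i,j}\alpha_{i,j}(Tx_i-Tx_j)$ induced by the split, where $\alpha_{i,j}$ is a probability distribution on pairs of indices; the constant~$3$ in the recursion traces back to this combination.
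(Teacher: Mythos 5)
Your reduction $\norm{Tv-w}\le\gamma_2^{-1}(\delta)+\gamma_n^{-1}(M)$ is a legitimate use of Lemma~\ref{lem:Bruck1.1} and the induction hypothesis, and the first sentence of the lemma is handled correctly. But the argument has a genuine gap exactly where you flag it: the estimate for $\delta=\norm{z_1-z_2}-\norm{Tz_1-Tz_2}$ in terms of $M$ is never proved, only announced as ``expected''. The route you sketch cannot deliver it. From $z_1-z_2=\sum_{i,j}\alpha_{i,j}(x_i-x_j)$ and $w_1-w_2=\sum_{i,j}\alpha_{i,j}(Tx_i-Tx_j)$ the triangle inequality gives only upper bounds on both norms, and the inequality you would need, namely $\norm{z_1-z_2}-\norm{w_1-w_2}\le\sum_{i,j}\alpha_{i,j}\bigl(\norm{x_i-x_j}-\norm{Tx_i-Tx_j}\bigr)$, says that the cancellation (convexity defect) among the vectors $Tx_i-Tx_j$ is dominated by that among the $x_i-x_j$; this is essentially the statement being proved and is false for a general nonexpansive map unless uniform convexity enters again. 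Concretely, the pairwise hypothesis constrains $T$ only at the points $x_i$, while $z_1,z_2$ are new points of $C$; in a space that is not strictly convex one can have all pairwise defects zero and $\delta>0$, so any correct bound on $\delta$ must reuse the type-$(\gamma)$ machinery (and this is precisely the delicate part of Bruck's proof that your sketch replaces by a hope).

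This is also where your approach diverges from the paper. The paper does not re-run Bruck's induction over convex combinations at all: it cites the proof of \cite[Lemma~2.1]{Bruck81} for the fact that the $n$-point inequality reduces to the purely numerical condition $\gamma_{n+1}^{-1}(s)\ge\gamma_2^{-1}(s)+\gamma_n^{-1}(s+2\cdot\gamma_2^{-1}(s))$, and then only verifies this condition for the functions of Definition~\ref{def:gamma_n}, using $\gamma_2(r)\le r$ (hence $r\le\gamma_n^{-1}(r)$) to bound the right-hand side by $2\cdot\gamma_n^{-1}(3\cdot\gamma_2^{-1}(s))$ --- this is where the constants $3$ and $t/2$ in the recursion come from. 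In Bruck's argument the inflation $s+2\cdot\gamma_2^{-1}(s)$ arises from replacing a point of the family by one at distance at most $\gamma_2^{-1}(s)$, each pairwise defect growing by at most $2\cdot\gamma_2^{-1}(s)$ by nonexpansiveness; a self-contained proof would have to reproduce that geometric bookkeeping rather than the midpoint split you propose. As it stands, your proposal is incomplete at its central step.
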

\begin{proof}
A straightforward induction over~$n$ yields the first sentence of the lemma. The point is that we can now consider the inverses~$\gamma_n^{-1}:[0,\infty)\to[0,\infty)$, which are strictly increasing as well. By the proof of~\cite[Lemma~2.1]{Bruck81}, the claim that~(\ref{eq:bound-nonlinearity}) holds for any convex combination (in the domain of~$T:C\to C$) reduces to
\begin{equation*}
\gamma_{n+1}^{-1}(s)\geq\gamma_2^{-1}(s)+\gamma_n^{-1}(s+2\cdot\gamma_2^{-1}(s)).
\end{equation*}
Given that Definition~\ref{def:gamma_2} forces~$\gamma_2(r)\leq r$, we inductively get $\gamma_{n+1}(r)\leq\gamma_n(r)\leq r$ and hence $r\leq\gamma_n^{-1}(r)$. This ensures that we have
\begin{equation*}
\gamma_2^{-1}(s)+\gamma_n^{-1}(s+2\cdot\gamma_2^{-1}(s))\leq 2\cdot\gamma_n^{-1}(3\cdot\gamma_2^{-1}(s))=:t.
\end{equation*}
We can thus conclude
\begin{equation*}
\gamma_{n+1}(\gamma_2^{-1}(s)+\gamma_n^{-1}(s+2\cdot\gamma_2^{-1}(s)))\leq\gamma_{n+1}(t)\leq\gamma_2\left(\frac{\gamma_n(t/2)}{3}\right)=s.
\end{equation*}
Since~$\gamma_{n+1}^{-1}$ is increasing, this yields the open claim.
\end{proof}

In order to obtain~(\ref{eq:bound-nonlinearity}) for a function~$\gamma$ that is independent of~$n$, Bruck argues that any convex combination can be approximated by one with a bounded number of summands. More specifically, this observation is applied to convex combinations of elements $(x,Tx)\in C\times C\subseteq X^2$. Here $X^2$ is a uniformly convex Banach space with norm given by $\norm{(x,y)}_2=(\norm{x}^2+\norm{y}^2)^{1/2}$, as discussed in the introduction. For a number~$p\geq 1$ and a subset $M\subseteq X^2$, we put
\begin{equation*}
\operatorname{co}_p(M)=\left\{\left.\sum_{i=1}^n\lambda_iz_i\,\right|\,z_i\in M\text{ and }\lambda_i\geq 0\text{ with }\sum_{i=1}^n\lambda_i=1\text{ for }n\leq p\right\}.
\end{equation*}
Let us note that we can always arrange~$n=p$ by repeating some of the~$z_i$. Also write $\operatorname{co}(M)$ for the full convex hull (i.\,e., the union over all $\operatorname{co}_p(M)$ with~$p\geq 1$), and recall that $B_r=\{z\in X^2\,|\,\norm{z}_2\leq r\}$ is the closed ball with radius~$r$. By a standing assumption, we have $c>0$ and $q>1$ that validate~(\ref{eq:B-convex-expectations}) from the introduction. The proof of~\cite[Theorem~1.1]{Bruck81} contains the following computational information.

\begin{lemma}\label{lem:co-p_approx}
We have $\operatorname{co}(M)\subseteq\operatorname{co}_p(M)+B_{r\cdot\varepsilon}$ when $M\subseteq B_r$ and $2cp^{(1-q)/q}\leq\varepsilon$.
\end{lemma}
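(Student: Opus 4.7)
The plan is to follow Bruck's probabilistic approach from the proof of \cite[Theorem~1.1]{Bruck81}, using the B-convexity inequality (\ref{eq:B-convex-expectations}) as the key analytic ingredient. Fix $z=\sum_{i=1}^n\lambda_iz_i\in\operatorname{co}(M)$ with $z_i\in M\subseteq B_r$, and note that $\norm{z}_2\leq r$ by convexity of $B_r$. I would like to approximate $z$ by a $p$-term convex combination with elements drawn from among the $z_i$.

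To do this, introduce i.i.d.\ random variables $Y_1,\dots,Y_p$ on some probability space, each taking the value $z_i$ with probability $\lambda_i$, and set $\bar Y:=\frac{1}{p}\sum_{j=1}^p Y_j$. Regardless of the realization, $\bar Y$ is a convex combination of at most $p$ elements of $M$ (after collecting repeats), so $\bar Y\in\operatorname{co}_p(M)$ pointwise. The crucial property is that $\mathbb E(Y_j)=z$, so the centred variables $Z_j:=Y_j-z$ are independent, mean zero, and satisfy $\norm{Z_j}_2\leq 2r$.

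Now I apply (\ref{eq:B-convex-expectations}) to $Z_1,\dots,Z_p$ in $X^2$:
\begin{equation*}
\mathbb E\left(\norm{\sum_{j=1}^p Z_j}_2^q\right)\leq c^q\sum_{j=1}^p\mathbb E(\norm{Z_j}_2^q)\leq c^q\cdot p\cdot(2r)^q.
\end{equation*}
Dividing by $p^q$ yields $\mathbb E(\norm{\bar Y-z}_2^q)\leq(2rc)^q\cdot p^{1-q}$, and since $q\geq 1$, Jensen's inequality gives
\begin{equation*}
\mathbb E(\norm{\bar Y-z}_2)\leq\bigl(\mathbb E(\norm{\bar Y-z}_2^q)\bigr)^{1/q}\leq 2rc\cdot p^{(1-q)/q}\leq r\varepsilon,
\end{equation*}
using the hypothesis $2cp^{(1-q)/q}\leq\varepsilon$. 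Since the expectation is at most $r\varepsilon$, there must exist a realization $\omega$ with $\norm{\bar Y(\omega)-z}_2\leq r\varepsilon$, and for this realization $\bar Y(\omega)\in\operatorname{co}_p(M)$, giving $z\in\operatorname{co}_p(M)+B_{r\varepsilon}$ as required.

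The only subtlety worth flagging is the centring step: one must check that the $Z_j$ (which take values in the difference $M-z$) genuinely admit the B-convexity bound in $X^2$, but since the hypothesis (\ref{eq:B-convex-expectations}) is stated for arbitrary independent $X^2$-valued random variables without a centring assumption and the bound on $\norm{Z_j}_2$ comes from the triangle inequality $\norm{Y_j}_2,\norm{z}_2\leq r$, there is no obstacle. Keeping track of the factor $2$ (from centring) and the factor $1/p$ (from averaging) is what produces the specific threshold $2cp^{(1-q)/q}$ appearing in the hypothesis.
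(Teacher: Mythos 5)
Your proposal is correct and follows essentially the same route as the paper: the same probabilistic construction (i.i.d.\ variables supported on the $z_i$ with probabilities $\lambda_i$, centred at $z$) combined with the B-convexity inequality (\ref{eq:B-convex-expectations}), the only cosmetic differences being that the paper rescales to $r=1$ and picks a good realization of the $q$-th power directly instead of passing through Jensen's inequality.
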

\begin{proof}
In the proof of~\cite[Theorem~1.1]{Bruck81}, Bruck seems to claim that the result holds for~$2cp^{1/q}<\varepsilon$. This appears counterintuitive, since $p\mapsto 2cp^{1/q}$ is strictly increasing while $\operatorname{co}_p(M)$ grows with~$p$. We recall Bruck's proof to show that it actually yields our bound (cf.~the 
proof of Theorem~6.2 in~\cite{Dilworth00} for a similar reasoning). By a straightforward rescaling we may assume~$r=1$. Consider any convex combination
\begin{equation*}
z=\sum_{i=1}^n\lambda_iz_i\in\operatorname{co}(M)\subseteq B_1.
\end{equation*}
For~$p$ as in the lemma, consider independent $X^2$-valued random variables~$Z_1,\ldots,Z_p$ with identical distribution given by
\begin{equation*}
Z_1,\ldots,Z_p\,\stackrel{\text{iid}}{\sim}\,\frac{Z-z}{p}\quad\text{for}\quad Z=z_i\text{ with probability }\lambda_i.
\end{equation*}
Now $(2/p)^q$ bounds all possible values of~$\norm{Z_i}_2^q$, and hence its expectation. By~(\ref{eq:B-convex-expectations}) we can conclude
\begin{equation*}
\mathbb E\left(\norm{\sum_{j=1}^pZ_j}_2^q\right)\leq(2c)^q\cdot p^{1-q}.
\end{equation*}
In particular, the right side must bound at least one possible value of~$\norm{\sum Z_j}_2^q$. More explicitly, there must be some event~$\omega:\{1,\ldots,p\}\to\{1,\ldots,n\}$ (under which $Z_j$ assumes value $(z_{\omega(j)}-z)/p$) such that we have
\begin{equation*}
\norm{z-\sum_{j=1}^p\frac{1}{p}\cdot z_{\omega(j)}}_2=\norm{\sum_{j=1}^p\frac{z_{\omega(j)}-z}{p}}_2\leq 2cp^{\frac{1-q}{q}}\leq\varepsilon.
\end{equation*}
This shows that our given $z\in\operatorname{co}(M)$ lies in $\operatorname{co}_p(M)+B_{\varepsilon}$, as desired.
\end{proof}

Following the construction by Bruck, we now diagonalize over the functions~$\gamma_n$, in order to achieve independence of~$n$.
 
\begin{definition}\label{def:gamma}
Let $\gamma:[0,\infty)\to[0,\infty)$ be given by $\gamma(0)=0$ and, for $t>0$,
\begin{equation*}
\gamma(t)=\gamma_{p(t)}\left(\frac{t}{3}\right)\quad\text{with}\quad p(t)=\max\left\{2,\ceil*{\left(\frac{6bc}{\sqrt{2}t}\right)^{q/(q-1)}}\right\}.
\end{equation*}
\end{definition}

The next proof is very close to the one of~\cite[Theorem~2.1]{Bruck81}. We provide details in order to show how the previous constructions come together.

\begin{proposition}\label{prop:Bruck2.1}
The function~$\gamma$ is strictly increasing and validates~(\ref{eq:bound-nonlinearity}) for any convex combination of elements $x_1,\ldots,x_n\in C$ with arbitrary~$n\geq 1$.
\end{proposition}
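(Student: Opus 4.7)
The argument splits into two parts: strict monotonicity of $\gamma$, and verification of the nonlinearity inequality (\ref{eq:bound-nonlinearity}) for arbitrary convex combinations.

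For strict monotonicity, note that $t\mapsto p(t)$ is weakly decreasing on $(0,\infty)$, since $6bc/(\sqrt{2}t)$ is, while Definition~\ref{def:gamma_n} makes the sequence $(\gamma_n(s))_n$ pointwise non-increasing. Hence for $0<t<t'$ one has $p(t)\geq p(t')$ and $\gamma_{p(t)}(s)\leq\gamma_{p(t')}(s)$ for all $s$, so by the strict monotonicity of each $\gamma_n$ from Lemma~\ref{lem:Bruck2.1},
\begin{equation*}
\gamma(t)=\gamma_{p(t)}(t/3)<\gamma_{p(t)}(t'/3)\leq\gamma_{p(t')}(t'/3)=\gamma(t').
\end{equation*}
The edge case $t=0$ is covered by $\gamma_n(s)>0$ for $s>0$.

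For the main inequality, the case $n=1$ is trivial. Assume $n\geq 2$ and set $z:=\sum_i\lambda_ix_i$, $w:=\sum_i\lambda_iTx_i$, and $t:=\norm{Tz-w}$; if $t=0$ we are done, so assume $t>0$. Work in $(X^2,\norm{\cdot}_2)$ and let $M:=\{(x_i,Tx_i)\mid 1\leq i\leq n\}$. Since $C\subseteq B_{b/2}$ in $X$, we have $M\subseteq B_r$ in $X^2$ for $r:=b/\sqrt{2}$, and $(z,w)=\sum_i\lambda_i(x_i,Tx_i)\in\operatorname{co}(M)$. Apply Lemma~\ref{lem:co-p_approx} with $p:=p(t)$ and $\varepsilon:=\sqrt{2}t/(3b)$: the definition of $p(t)$ is rigged so that $2cp^{(1-q)/q}\leq\varepsilon$, yielding $(z',w')\in\operatorname{co}_p(M)$ with $\norm{(z-z',w-w')}_2\leq r\varepsilon=t/3$, and in particular $\norm{z-z'}, \norm{w-w'}\leq t/3$. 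Writing $(z',w')=\sum_{j=1}^p(1/p)(x_{\omega(j)},Tx_{\omega(j)})$ for suitable $\omega(j)\in\{1,\ldots,n\}$ (so $z'$ is a $p$-term convex combination of the $x_{\omega(j)}$), nonexpansiveness gives $\norm{Tz-Tz'}\leq t/3$, and the triangle inequality yields
\begin{equation*}
\norm{Tz'-w'}\geq\norm{Tz-w}-\norm{Tz-Tz'}-\norm{w-w'}\geq t-2\cdot\frac{t}{3}=\frac{t}{3}.
\end{equation*}
Lemma~\ref{lem:Bruck2.1} applied at $p(t)\geq 2$ (to the points $x_{\omega(1)},\ldots,x_{\omega(p)}\in C$) combined with monotonicity of $\gamma_{p(t)}$ then gives
\begin{equation*}
\gamma(t)=\gamma_{p(t)}(t/3)\leq\gamma_{p(t)}(\norm{Tz'-w'})\leq\max_{j,k}(\norm{x_{\omega(j)}-x_{\omega(k)}}-\norm{Tx_{\omega(j)}-Tx_{\omega(k)}}),
\end{equation*}
which is bounded by the maximum over all $1\leq i,j\leq n$.

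The delicate point is bookkeeping: the factor $r=b/\sqrt{2}$ arising from the embedding $C\hookrightarrow X^2$, the $1/3$ inside Definition~\ref{def:gamma}, and the exponent $q/(q-1)$ in $p(t)$ must be calibrated so that both $r\varepsilon\leq t/3$ and $2cp^{(1-q)/q}\leq\varepsilon$ hold with the precise constants chosen; in particular the gap of $t/3$ between $t$ and $\norm{Tz'-w'}$ is what forces the division by $3$ in $\gamma(t)=\gamma_{p(t)}(t/3)$. Everything else is a direct assembly of Lemmas~\ref{lem:Bruck2.1} and~\ref{lem:co-p_approx}.
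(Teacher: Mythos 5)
Your proposal is correct and follows essentially the same route as the paper: embed the points as pairs $(x_i,Tx_i)$ in $(X^2,\norm{\cdot}_2)$, apply Lemma~\ref{lem:co-p_approx} with $\varepsilon=\sqrt{2}t/(3b)$ and $p=p(t)$ (whose definition guarantees $2cp^{(1-q)/q}\leq\varepsilon$), use nonexpansiveness and the triangle inequality to see that the $p$-term combination has defect at least $t/3$, and conclude via Lemma~\ref{lem:Bruck2.1}; the monotonicity argument likewise matches the paper's, with the two monotonicity steps merely applied in the opposite order.
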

\begin{proof}
The function $t\mapsto p(t)$ is decreasing, as $q>1$ holds by a standing assumption from the introduction. Since Definition~\ref{def:gamma_n} ensures $\gamma_{n+1}(t)\leq\gamma_n(t)$, this yields
\begin{equation*}
s<t\quad\Rightarrow\quad \gamma(s)=\gamma_{p(s)}\left(\frac{s}{3}\right)\leq\gamma_{p(t)}\left(\frac{s}{3}\right)<\gamma_{p(t)}\left(\frac{t}{3}\right)=\gamma(t).
\end{equation*}
To establish~(\ref{eq:bound-nonlinearity}), consider a convex combination $\sum\lambda_ix_i$ of $x_1,\ldots,x_n\in C$. We set
\begin{equation*}
t:=\norm{T\left(\sum_{i=1}^n\lambda_ix_i\right)-\sum_{i=1}^n\lambda_iTx_i}
\end{equation*}
and assume~$t>0$, as the remaining case is trivial. The crucial idea (from the proof by Bruck~\cite{Bruck81}) is to apply Lemma~\ref{lem:co-p_approx} to $M=\{z_1,\ldots,z_n\}$ with $z_i=(x_i,Tx_i)$. By standing assumption~(\ref{eq:bound-b-FP}) we have $C\subseteq B_{b/2}$ in~$X$ and hence $M\subseteq C\times C\subseteq B_{b/\sqrt{2}}$ in~$X^2$ (recall $\norm{(x,y)}_2=(\norm{x}^2+\norm{y}^2)^{1/2}$). One readily checks
\begin{equation*}
\varepsilon:=\frac{\sqrt{2}t}{3b}\geq 2c\cdot p(t)^{(1-q)/q}.
\end{equation*}
For this~$\varepsilon$ and with~$p=p(t)$, Lemma~\ref{lem:co-p_approx} yields a subset $\{z_{i_1},\ldots,z_{i_p}\}\subseteq M$ and coefficients $\mu_1,\ldots,\mu_p\geq 0$ with $\sum\mu_j=1$ such that
\begin{equation*}
\norm{\sum_{i=1}^n\lambda_iz_i-\sum_{j=1}^p\mu_j z_{i_j}}_2\leq\frac{t}{3}.
\end{equation*}
In view of~$\norm{x},\norm{y}\leq\norm{(x,y)}_2$ and since $T$ is nonexpansive, this yields
\begin{equation*}
\norm{T\left(\sum_{i=1}^n\lambda_ix_i\right)-T\left(\sum_{j=1}^p\mu_j x_{i_j}\right)}\leq\frac{t}{3}\quad\text{and}\quad\norm{\sum_{i=1}^n\lambda_iTx_i-\sum_{j=1}^p\mu_j Tx_{i_j}}\leq\frac{t}{3}.
\end{equation*}
Using the triangle inequality, we can conclude
\begin{equation*}
t\leq\frac{t}{3}+\norm{T\left(\sum_{j=1}^p\mu_jx_{i_j}\right)-\sum_{j=1}^p\mu_jTx_{i_j}}+\frac{t}{3}.
\end{equation*}
Hence the remaining norm on the right must be at least~$t/3$. By Lemma~\ref{lem:Bruck2.1} we get
\begin{equation*}
\gamma(t)=\gamma_p\left(\frac{t}{3}\right)\leq\max_{1\leq k,l\leq p}(\norm{x_{i_k}-x_{i_l}}-\norm{Tx_{i_k}-Tx_{i_l}}).
\end{equation*}
This suffices to conclude, as the maximum on the right becomes larger when we admit all $i,j\in\{1,\ldots,n\}$, rather than those of the form $i=i_k$ and $j=i_l$ only.
\end{proof}

\section{Nonlinearity and Ces\`aro means}

Recall that $S_nx$ denotes the Ces\`aro mean $(T^0x+\ldots+T^{n-1}x)/n$. If~$T$ is nonlinear, then it may fail to commute with~$S_n$. This failure can be measured by
\begin{equation*}
\beta^l_n:=\norm{S_nT^{l+n}x-T^lS_nT^nx}.
\end{equation*}
More generally, we can recover these values as $\beta_n^l=\beta_{n,n}^l$ for
\begin{equation*}
\beta_{m,n}^l:=\norm{\frac{S_mT^{l+m}x+S_nT^{l+n}x}{2}-T^l\left(\frac{S_mT^mx+S_nT^nx}{2}\right)}.
\end{equation*}
Kobayasi and Miyadera~\cite[Lemma~1]{KobMiy} show that, under the assumptions made in our Theorem~\ref{thm:KobMiy} (which is their Theorem~1), we have
\begin{equation*}
\lim_{n\to\infty}\beta_n^l=\lim_{m,n\to\infty}\beta_{m,n}^l=0.
\end{equation*}
In the present section, we establish a rate of metastability for this result. Let us recall that we have only assumed a rate of metastability rather than convergence in~(\ref{eq:metastab-alpha}). If we had assumed a rate of convergence there, then we would get one for the~$\beta^l_{m,n}$ here (simply read $g(N)=\infty=h(N)$ below). However, we would then be forced to downgrade from convergence to metastability later, as explained in the introduction. In view of this fact, it is reasonable to work with a rate of metastability from the outset, as this makes for a computationally weaker assumption. Concerning the next definition, recall that our standing assumptions provide a map $(\varepsilon,g,h)\mapsto A(\varepsilon,g,h)$ that validates~(\ref{eq:metastab-alpha}). The function~$\gamma$ is given by Definition~\ref{def:gamma}.

\begin{definition}\label{def:metastab-beta}
For $\varepsilon>0$ and $g,h:\mathbb N\to\mathbb N$ we put $B(\varepsilon,g,h):=A(\gamma(\varepsilon),g',h')$ with $g'(N):=N+2\cdot g(N)+h(N)$ and $h'(N):=2\cdot(N+g(N))$.
\end{definition}

By analyzing the proof of~\cite[Lemma~1]{KobMiy}, we see the following:

\begin{lemma}\label{lem:KobMiy-Lem1}
For any $\varepsilon>0$ and $g,h:\mathbb N\to\mathbb N$, there is an~$N\leq B(\varepsilon,g,h)$ with
\begin{equation*}
\beta_{m,n}^l\leq\varepsilon\quad\text{for all }m,n\in[N,N+g(N)]\text{ and all }l\leq h(N).
\end{equation*}
In particular we get $\beta_n^l\leq\varepsilon$ under the same conditions on~$n$ and~$l$.
\end{lemma}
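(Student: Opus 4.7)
The plan is to realise $\beta_{m,n}^l$ as the ``Bruck defect'' of the nonexpansive iterate $T^l$ applied to an explicit convex combination. Expanding the Ces\`aro averages, $S_m T^m x=\frac{1}{m}\sum_{i=m}^{2m-1}T^ix$ (and likewise for $n$), so that
\[
\tfrac12(S_m T^m x + S_n T^n x) = \sum_{i \in I_{m,n}} \lambda_i T^ix
\]
is a convex combination with indices in $I_{m,n}:=\{m,\ldots,2m-1\}\cup\{n,\ldots,2n-1\}$ (where coefficients of repeated indices are added if the two blocks overlap). Distributing $T^l$ past the sum gives $\sum_i\lambda_i T^{l+i}x=\tfrac12(S_m T^{l+m}x+S_n T^{l+n}x)$; hence $\beta_{m,n}^l$ measures precisely the failure of $T^l$ to commute with this convex combination.

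Because $T^l$ is nonexpansive (as $T$ is) and $\gamma$ from Definition \ref{def:gamma} depends only on the space data $b,\eta,c,q$, Proposition \ref{prop:Bruck2.1} applied to $T^l$ yields
\[
\gamma(\beta_{m,n}^l)\le\max_{i,j\in I_{m,n}}\bigl(\norm{T^ix-T^jx}-\norm{T^{l+i}x-T^{l+j}x}\bigr).
\]
By symmetry we may assume $i\le j$ in each pair, in which case the right-hand entry equals $\alpha^{j-i}_i-\alpha^{j-i}_{l+i}$, a non-negative quantity (again by nonexpansiveness).

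Now apply the metastability hypothesis (\ref{eq:metastab-alpha}) with tolerance $\gamma(\varepsilon)$ and the shifted moduli $g'(N)=N+2g(N)+h(N)$, $h'(N)=2(N+g(N))$ of Definition \ref{def:metastab-beta}, producing some $N\le A(\gamma(\varepsilon),g',h')=B(\varepsilon,g,h)$. For $m,n\in[N,N+g(N)]$ and $l\le h(N)$ one checks that the indices fall into the required windows: any $i\in I_{m,n}$ satisfies $N\le i\le 2(N+g(N))-1\le N+g'(N)$, and $l+i\le h(N)+2(N+g(N))-1\le N+g'(N)$, while the displacement satisfies $j-i\le 2(N+g(N))-1-N\le h'(N)$. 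The assumption therefore forces every entry of the maximum to be strictly below $\gamma(\varepsilon)$, whence $\gamma(\beta_{m,n}^l)<\gamma(\varepsilon)$ and so $\beta_{m,n}^l<\varepsilon$ by strict monotonicity of $\gamma$. The case $m=n$ gives the bound on $\beta_n^l$.

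The main obstacle is the index bookkeeping behind the shifted moduli $g'$ and $h'$: they must be calibrated so that both subscripts $i$ and $l+i$ of the relevant $\alpha$-values remain inside the $A$-Cauchy window (forcing both the summand $h(N)$ and the doubled $g(N)$ inside $g'$), while the superscripts $j-i$ stay within the uniformity range $h'$ (forcing the factor $2$ in $h'$). Once this is calibrated, the argument is a clean application of Proposition \ref{prop:Bruck2.1} to $T^l$ followed by a direct invocation of (\ref{eq:metastab-alpha}).
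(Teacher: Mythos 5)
Your proof is correct and follows essentially the same route as the paper: both realise $\tfrac12(S_mT^mx+S_nT^nx)$ as a convex combination of iterates of $x$, apply Proposition~\ref{prop:Bruck2.1} with $T^l$ in place of $T$ (noting $\gamma$ depends only on the space data) to bound $\gamma(\beta_{m,n}^l)$ by differences of $\alpha$-values, and then invoke~(\ref{eq:metastab-alpha}) at tolerance $\gamma(\varepsilon)$ with the shifted moduli $g',h'$ of Definition~\ref{def:metastab-beta}. The only difference is cosmetic (you index the combination by the merged set $I_{m,n}$ rather than by $m+n$ points listed with repetition), and your index bookkeeping agrees with the paper's.
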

To be precise, we point out that $\beta_{m,n}^l$ is only defined for~$m,n>0$. In order to avoid this condition, one can simply declare $\beta_{0,n}^l=0=\beta_{m,0}^l$.
\begin{proof}
We first observe that Proposition~\ref{prop:Bruck2.1} remains valid with~$T^l:C\to C$ at the place of~$T:C\to C$, with the same~$\gamma:[0,\infty)\to[0,\infty)$ for all~$l\in\mathbb N$. Indeed, a glance at Definitions~\ref{def:gamma_2},~\ref{def:gamma_n} and~\ref{def:gamma} reveals that the relevant quantitative information depends on our Banach space~$X$ and the bounded subset~$C\subseteq X$ only (cf.~the standing assumptions (\ref{eq:bound-b-FP}-\ref{eq:metastab-alpha}) from the introduction). The qualitative assumption that~$T$ is nonexpansive holds of~$T^l$ as well. As in the proof of~\cite[Lemma~1]{KobMiy}, we now apply~(\ref{eq:bound-nonlinearity}) with~$T^l$ and $m+n$ at the place of~$T$ and~$n$, respectively, and with
\begin{equation*}
\begin{cases}
\begin{alignedat}{5}
x_i&{}=T^{m+i-1}x&\quad\text{and}\quad &\lambda_i&&{}=1/(2m)\quad && \text{for}\quad 1\leq i\leq m,\\
x_i&{}=T^{n+i-(m+1)}x&\quad\text{and}\quad &\lambda_i&&{}=1/(2n)\quad && \text{for}\quad m+1\leq i\leq m+n.
\end{alignedat}
\end{cases}
\end{equation*}
One readily checks that this yields
\begin{equation*}
\sum_{i=1}^{m+n}\lambda_ix_i=\frac{S_mT^mx+S_nT^nx}{2}\quad\text{and}\quad\sum_{i=1}^{m+n}\lambda_iT^lx_i=\frac{S_mT^{l+m}x+S_nT^{l+n}x}{2}.
\end{equation*}
Hence inequality~(\ref{eq:bound-nonlinearity}) amounts to
\begin{equation*}
\gamma(\beta_{m,n}^l)\leq\max\left\{\norm{x_i-x_j}-\norm{T^lx_i-T^lx_j}\,:\,1\leq i,j\leq m+n\right\}.
\end{equation*}
Writing $\alpha_n^i=\norm{T^nx-T^{n+i}x}$ as in the introduction, we get
\begin{equation}\label{eq:estimate-beta}
\gamma(\beta_{m,n}^l)\leq\max\{\alpha_k^p-\alpha_{l+k}^p\,|\,\min\{m,n\}\leq k<2\cdot\max\{m,n\}>p\}.
\end{equation}
Let $g'$ and $h'$ be given as in Definition~\ref{def:metastab-beta}. By a standing assumption from the introduction, pick an $N\leq A(\gamma(\varepsilon),g',h')$ such that (\ref{eq:metastab-alpha}) holds with $\gamma(\varepsilon),g'$ and $h'$ at the place of $\varepsilon,g$ and $h$, respectively. To establish the present lemma, consider arbitrary~$m,n\in[N,N+g(N)]$ and $l\leq h(N)$. For all~$k,l,p$ as in~(\ref{eq:estimate-beta}) we have
\begin{equation*}
N\leq k\leq k+l<2\cdot(N+g(N))+h(N)=N+g'(N)
\end{equation*}
and $p<h'(N)$. Hence the current version of~(\ref{eq:metastab-alpha}) yields $\alpha_k^p-\alpha_{l+k}^p<\gamma(\varepsilon)$ in all cases that are relevant for~(\ref{eq:estimate-beta}). The latter thus entails $\gamma(\beta_{m,n}^l)<\gamma(\varepsilon)$. Proposition~\ref{prop:Bruck2.1} tells us that~$\gamma$ is strictly increasing, so that we get $\beta_{m,n}^l<\varepsilon$ as desired.
\end{proof}

\section{Ces\`aro means and fixed points}\label{sect:fixed-points}

This section completes the quantitative analysis of Kobayasi and Miyadera's results in~\cite{KobMiy}, as sketched in the introduction.

We first analyze the preliminary result from Lemma~2 of~\cite{KobMiy}, which asserts that
\begin{equation*}
\theta_n^f:=\norm{S_nT^nx-f}
\end{equation*}
converges whenever~$f=Tf$ is a fixed point. From a methodological standpoint, this can be seen as the most interesting part of our quantitative analysis: it is here that we are forced to extract a rate of metastability rather than convergence. In order to make this transparent, we recall the original proof of~\cite[Lemma~2]{KobMiy}. For a fixed point $f=Tf$ and arbitrary $m\geq 1$ and $n\in\mathbb N$, the cited proof shows
\begin{equation}\label{eq:proof-KobMiy-Lem2}
\theta_{m+n}^f\leq\theta_m^f+\frac{m-1}{m+n}\cdot\norm{x-f}+\frac{1}{m+n}\cdot\sum_{i=0}^{m+n-1}\beta_m^{n+i},
\end{equation}
with $\beta^l_n=\norm{S_nT^{l+n}x-T^lS_nT^nx}$ as in the previous section. The proof then concludes with a ``limsup$\,\leq\,$liminf"-argument, which can be spelled out as follows: Write $\theta^f$ for the limit inferior of the sequence $(\theta^f_n)$. According to \cite[Lemma~1]{KobMiy} we have $\lim_{n\to\infty}\beta^l_n=0$ uniformly in~$l$ (cf.~our Lemma~\ref{lem:KobMiy-Lem1}). Given~$\varepsilon>0$, there must thus be a number $m\in\mathbb N$ such that we have
\begin{equation*}
\text{(i) $\theta^f_k\geq\theta^f-\varepsilon/4$ for~$k\geq m$,}\quad\text{(ii) $\beta^l_m\leq\varepsilon/4$ for~$l\in\mathbb N$,}\quad\text{(iii) $\theta^f_m\leq\theta^f+\varepsilon/4$.}
\end{equation*}
Put $N:=\max\{m,\ceil{4(m-1)\cdot\norm{x-f}/\varepsilon}\}$. For $j\geq N$ (think~$j=m+n$) we can combine~(\ref{eq:proof-KobMiy-Lem2}) and~(ii) to get $\theta^f_j\leq\theta^f_m+\varepsilon/2$, which by~(iii) yields $\theta^f_j\leq\theta^f+3\varepsilon/4$. Together with~(i) we get~$|\theta^f_j-\theta^f_k|\leq\varepsilon$ for $j,k\geq N$, as needed to show that $(\theta^f_n)$ is a Cauchy sequence. Our focus on the Cauchy property (rather than on convergence to the limit~$\theta^f$) assimilates the argument to the quantitative analysis below.

From a computational viewpoint, the previous argument appears problematic, because we do not know how fast the limit inferior is approximated. More explicitly, there is no obvious bound on a number~$m$ that would satisfy (i) or~(iii) above. Nevertheless, a modified argument does reveal quantitative information: If~(ii) holds for sufficiently many~$l$ (which can be ensured by Lemma~\ref{lem:KobMiy-Lem1}), then we obtain $\theta^f_j\leq\theta^f_m+\varepsilon/2$ as above. Without reference to~$\theta^f$, we can conclude that $|\theta^f_j-\theta^f_k|\leq\varepsilon$ holds unless we have~$\theta^f_k<\theta^f_m-\varepsilon/2$. In the latter case, we repeat the argument with~$k$ at the place of~$m$. Crucially, there can only be finitely many repetitions of this type, as~$\theta_n^f$ cannot become negative. For an explicit bound, use $f=Tf$ to get
\begin{equation}\label{eq:bound-theta^f_n}
\theta_n^f=\norm{f-\frac{1}{n}\cdot\sum_{i=0}^{n-1}T^ix}\leq\frac{1}{n}\cdot\sum_{i=0}^{n-1}\norm{T^ix-T^if}\leq\norm{x-f}\leq b.
\end{equation}
Here $b$ is a bound on the diameter of~$C\subseteq B_{b/2}$, the domain of our nonexpansive map $T:C\to C$ (cf.~standing assumption~(\ref{eq:bound-b-FP}) from the introduction). The preceding discussion is somewhat informal, but it helps to motivate the following:

\begin{definition}\label{def:Theta}
Consider a function~$g:\mathbb N\to\mathbb N$. As usual, we write $g^M$ for the monotone bound given by $g^M(n):=\max_{m\leq n}g(m)$. For~$\varepsilon>0$ we put
\begin{equation*}
g_\varepsilon(n):=n'+g^M(n')\quad\text{with}\quad n'=\max\left\{n,\ceil*{\frac{4nb}{\varepsilon}}\right\}.
\end{equation*}
Now consider the iterates~$g_\varepsilon^{(i)}:\mathbb N\to\mathbb N$ (with corrected start value) given by
\begin{equation*}
g_\varepsilon^{(0)}(n)=\max\{1,n\}\quad\text{and}\quad g_\varepsilon^{(i+1)}(n)=g_\varepsilon\left(g_\varepsilon^{(i)}(n)\right).
\end{equation*}
Finally, define a map $(\varepsilon,g)\mapsto\Theta(\varepsilon,g)$ by setting
\begin{equation*}
\Theta(\varepsilon,g):=g_\varepsilon^{(K+1)}\left(B\left(\frac{\varepsilon}{4},g_\varepsilon^{(K+1)},2\cdot g_\varepsilon^{(K+1)}\right)\right)\quad\text{with}\quad K=\ceil*{\frac{2b}{\varepsilon}}.
\end{equation*}
Here $B$ is the map from Definition~\ref{def:metastab-beta}.
\end{definition}

As a systematic explanation for logicians, we point out that $\Theta$ can be seen as the combination of two rates of metastability (cf.~Theorem~5.8 in the preprint version~\href{https://arxiv.org/abs/1412.5563}{arXiv:1412.5563} of~\cite{kohlenbach-leustean-nicolae_fejer}): It is well known that a non-increasing sequence in~$[0,b]$ admits a rate of metastability that depends on~$b$ only (see~\cite[Proposition~2.27]{kohlenbach-book}). We have a similar rate here, given that the $\theta^f_n$ are ``almost" non-increasing by~(\ref{eq:proof-KobMiy-Lem2}). This rate is combined with the rate $B$, which ensures that the last summand in~(\ref{eq:proof-KobMiy-Lem2}) is indeed small (see Lemma~\ref{lem:KobMiy-Lem1}). The fact that $\Theta$ combines two rates is made explicit in Corollary~\ref{cor:Theta^B} below (note that $\Theta^B$ is a minor variant of~$\Theta$). Let us now present our metastable version of~\cite[Lemma~2]{KobMiy}:

\begin{proposition}\label{prop:KobMiy-Lem2}
Let $f=Tf$ be a fixed point. For any~$\varepsilon>0$ and $g:\mathbb N\to\mathbb N$, there is an $N\leq\Theta(\varepsilon,g)$ such that $|\theta_m^f-\theta_n^f|<\varepsilon$ holds for all~$m,n\in [N,N+g(N)]$.
\end{proposition}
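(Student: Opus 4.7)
The plan is to formalise the informal ``ratchet'' argument laid out in the paragraph preceding Definition~\ref{def:Theta}. I would construct an adaptive sequence of reference points $m_0\leq m_1\leq\ldots\leq m_{K+1}$ so that, whenever the desired metastability fails at stage~$i$, one obtains a strict drop $\theta_{m_{i+1}}^f\leq\theta_{m_i}^f-\varepsilon/2$. Since $\theta_n^f\in[0,b]$ by~(\ref{eq:bound-theta^f_n}) and $(K+1)\varepsilon/2>b$, such drops cannot occur $K+1$ times in a row, so some stage~$i\leq K$ must succeed and the resulting $N_{i+1}$ will witness the conclusion with $N_{i+1}\leq\Theta(\varepsilon,g)$.

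First I would apply Lemma~\ref{lem:KobMiy-Lem1} with parameters $\varepsilon/4$, $g_\varepsilon^{(K+1)}$ and $2\cdot g_\varepsilon^{(K+1)}$, obtaining $M\leq M_0:=B(\varepsilon/4,g_\varepsilon^{(K+1)},2\cdot g_\varepsilon^{(K+1)})$ such that $\beta_k^l\leq\varepsilon/4$ for every $k\in[M,M+g_\varepsilon^{(K+1)}(M)]$ and every $l\leq 2\cdot g_\varepsilon^{(K+1)}(M)$. Then I set $m_0:=g_\varepsilon^{(0)}(M)=\max\{1,M\}$ and, at each stage~$i$, put $N_{i+1}:=\max\{m_i,\ceil*{4m_ib/\varepsilon}\}$ and $R_i:=[N_{i+1},N_{i+1}+g(N_{i+1})]$. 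A short induction yields both $m_i\leq g_\varepsilon^{(i)}(M)$ and $N_{i+1}+g(N_{i+1})\leq N_{i+1}+g^M(N_{i+1})=g_\varepsilon(m_i)\leq g_\varepsilon^{(i+1)}(M)$, so every index in play remains inside $[M,M+g_\varepsilon^{(K+1)}(M)]$ as long as $i\leq K$.

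The core estimate, applying~(\ref{eq:proof-KobMiy-Lem2}) with $m=m_i$ and $n=j-m_i$ for $j\in R_i$, is that $\theta_j^f\leq\theta_{m_i}^f+\varepsilon/2$. Indeed, the first error term is $\leq m_ib/N_{i+1}\leq\varepsilon/4$ from $N_{i+1}\geq 4m_ib/\varepsilon$ and $\norm{x-f}\leq b$, while the second is $\leq\varepsilon/4$ because each $\beta_{m_i}^l$ that occurs has $l\leq 2j\leq 2\cdot g_\varepsilon^{(K+1)}(M)$ and $m_i$ lies in the $\beta$-window. If $|\theta_m^f-\theta_n^f|<\varepsilon$ holds throughout some $R_i$ with $i\leq K$, I return $N:=N_{i+1}\leq g_\varepsilon^{(i+1)}(M)\leq g_\varepsilon^{(K+1)}(M_0)=\Theta(\varepsilon,g)$ (using monotonicity of $g_\varepsilon$ and $M\leq M_0$). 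Otherwise, choosing $m_{i+1}\in R_i$ to minimise $\theta^f$ on $R_i$ and combining $\max_{j\in R_i}\theta_j^f\leq\theta_{m_i}^f+\varepsilon/2$ with $\max_{j\in R_i}\theta_j^f-\theta_{m_{i+1}}^f\geq\varepsilon$ gives $\theta_{m_{i+1}}^f\leq\theta_{m_i}^f-\varepsilon/2$, powering the ratchet.

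The main obstacle is the bookkeeping tying the adaptive references to the fixed $\beta$-window: each $m_i$ must remain in $[M,M+g_\varepsilon^{(K+1)}(M)]$, each $N_{i+1}$ must satisfy $N_{i+1}\geq 4m_ib/\varepsilon$ (this explains the inner $\max$ in the definition of $g_\varepsilon$), and the entire range $R_i$ must fit inside $g_\varepsilon(m_i)$ so that the iteration chains into the next stage (this explains the $g^M(n')$ summand). Verifying these invariants carefully, rather than any deeper new idea, is what the proof will amount to.
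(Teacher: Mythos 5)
Your proposal is correct and follows essentially the same route as the paper's own proof: the same application of Lemma~\ref{lem:KobMiy-Lem1} with parameters $\varepsilon/4$, $g_\varepsilon^{(K+1)}$, $2\cdot g_\varepsilon^{(K+1)}$, the same use of~(\ref{eq:proof-KobMiy-Lem2}) with the reference point $m_i$ (the paper's $n(i)$), and the same bound $n(i)\leq g_\varepsilon^{(i)}(N_0)$ driving the ratchet, capped at $K+1=\ceil{2b/\varepsilon}+1$ drops via $\theta^f_n\in[0,b]$. The only difference is presentational: you phrase it as ``return at the first successful stage, otherwise ratchet'', whereas the paper runs the recursion under a global contradiction hypothesis.
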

\begin{proof}
For~$K=\ceil{2b/\varepsilon}$, Lemma~\ref{lem:KobMiy-Lem1} yields an~$N_0\leq B(\varepsilon/4,g_\varepsilon^{(K+1)},2\cdot g_\varepsilon^{(K+1)})$ with
\begin{equation}\label{eq:beta-for-Theta}
\beta_m^l\leq\frac{\varepsilon}{4}\quad\text{for all }m\in\left[N_0,N_0+g_\varepsilon^{(K+1)}(N_0)\right]\text{ and }l\leq 2\cdot g_\varepsilon^{(K+1)}(N_0).
\end{equation}
We will show that some $N\in[N_0,g_\varepsilon^{(K+1)}(N_0)]$ validates the proposition. To see that we get~$N\leq\Theta(\varepsilon,g)$, it suffices to observe that each iterate~$g_\varepsilon^{(i)}$ is increasing. Inductively, this reduces to the same statement about~$g_\varepsilon$, which holds by construction. Aiming at a contradiction, we now assume: for any number $N\in[N_0,g_\varepsilon^{(K+1)}(N_0)]$ there are $m,n\in[N,N+g(N)]$ with $|\theta_m^f-\theta_n^f|\geq\varepsilon$. We shall construct a sequence of numbers $n(i)\leq g_\varepsilon^{(i)}(N_0)$ with $\theta^f_{n(i)}\leq b-i\varepsilon/2$ by recursion on $i\leq K+1$, which contradicts $\theta^f_{n(K+1)}\geq 0$. In the base case, set $n(0)=g_\varepsilon^{(0)}(N_0)$ and note $\theta^f_{n(0)}\leq b$ due to~(\ref{eq:bound-theta^f_n}). In the recursion step from $i\leq K$ to $i+1$, we consider
\begin{equation*}
N:=\max\left\{n(i),\ceil*{\frac{4b\cdot n(i)}{\varepsilon}}\right\}.
\end{equation*}
By~(\ref{eq:proof-KobMiy-Lem2}) for $m=n(i)$ and $n=N-n(i)+k$ (with arbitrary~$k$) we get
\begin{equation}\label{eq:proof-KobMiy-Lem2-for-Theta}
\theta^f_{N+k}\leq\theta^f_{n(i)}+\frac{\varepsilon}{4}+\frac{1}{N+k}\cdot\sum_{j=0}^{N+k-1}\beta_{n(i)}^{N-n(i)+k+j}.
\end{equation}
In order to apply~(\ref{eq:beta-for-Theta}), we verify that the indices of~$\beta$ lie in the appropriate interval of metastability. We anticipate that we will choose an $n(i+1)$ above $N\geq n(i)$, so that we may inductively assume $n(i)\geq n(0)\geq N_0$. As the function~$g_\varepsilon$ is increasing with $g_\varepsilon(j)\geq j$, we also have
\begin{equation*}
n(i)\leq N\leq N+g^M(N)=g_\varepsilon(n(i))\leq g_\varepsilon\left(g_\varepsilon^{(i)}(N_0)\right)\leq g_\varepsilon^{(K+1)}(N_0).
\end{equation*}
Furthermore, for $k\leq g(N)$ and $j<N+k$ we obtain
\begin{equation*}
N-n(i)+k+j<2\cdot(N+g^M(N))\leq 2\cdot g_\varepsilon^{(K+1)}(N_0).
\end{equation*}
In view of these bounds, we can combine (\ref{eq:beta-for-Theta}) and (\ref{eq:proof-KobMiy-Lem2-for-Theta}) to get
\begin{equation*}
\theta^f_{N+k}\leq\theta^f_{n(i)}+\frac{\varepsilon}{2}\quad\text{for}\quad k\leq g(N).
\end{equation*}
On the other hand, we have $|\theta^f_m-\theta^f_n|\geq\varepsilon$ for some $m,n\in [N,N+g(N)]$, by the contradictory assumption for the present $N\in[N_0,g_\varepsilon^{(K+1)}(N_0)]$. Thus there must be a~$k\leq g(N)$ with $\theta^f_{N+k}\leq\theta^f_{n(i)}-\varepsilon/2$. In order to complete the recursion step, we set $n(i+1):=N+k$ for some such $k$. Note that we indeed get
\begin{gather*}
n(i+1)\leq N+g^M(N)\leq g_\varepsilon^{i+1}(N_0),\\
\theta^f_{n(i+1)}\leq\theta^f_{n(i)}-\frac{\varepsilon}{2}\leq b-\frac{i\cdot\varepsilon}{2}-\frac{\varepsilon}{2}=b-\frac{(i+1)\cdot\varepsilon}{2},
\end{gather*}
so that $n(i+1)$ retains the properties that were promised above.
\end{proof}

We will want to bound $|\theta_m^f-\theta_n^f|$ and $\beta^l_m$ simultaneously, i.\,e., on the same interval of metastability. In the present case, it suffices to tweak our previous construction:

\begin{definition}
Extending Definition~\ref{def:Theta}, we set
\begin{equation*}
\Theta^B(\varepsilon,g,h):=g_\varepsilon^{(K+1)}\left(B\left(\frac{\varepsilon}{4},g_\varepsilon^{(K+2)},\max\left\{2\cdot g_\varepsilon^{(K+1)},h^M\circ g_\varepsilon^{(K+1)}\right\}\right)\right),
\end{equation*}
still with $K=\ceil{2b/\varepsilon}$ and $h^M(n)=\max_{m\leq n}h(m)$.
\end{definition}

As promised, this yields a simultaneous bound:

\begin{corollary}\label{cor:Theta^B}
For a fixed point $f=Tf$ and arbitrary $\varepsilon>0$ and $g,h:\mathbb N\to\mathbb N$, there is an $N\leq\Theta^B(\varepsilon,g,h)$ such that we have both $\beta_{m,n}^l\leq\varepsilon/4$ and $|\theta^f_m-\theta_n^f|<\varepsilon$ for all $m,n\in[N,N+g(N)]$ and $l\leq h(N)$.
\end{corollary}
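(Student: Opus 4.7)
The plan is to reuse the argument of Proposition~\ref{prop:KobMiy-Lem2} almost verbatim, but to feed slightly enlarged parameters into the initial call of Lemma~\ref{lem:KobMiy-Lem1} so that the $\beta$-metastability it furnishes continues to hold on the final interval $[N,N+g(N)]$ and up to $l\leq h(N)$ that are required for the simultaneous conclusion. The only thing that changes relative to $\Theta$ is the second and third arguments to $B$: these have to be large enough not only for the internal $\beta$-bound needed inside the proof of the proposition, but also to dominate $g$ and $h$ at the point $N$ eventually produced.

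First I would apply Lemma~\ref{lem:KobMiy-Lem1} with parameters $\varepsilon/4$, $g_\varepsilon^{(K+2)}$ and $\max\{2 g_\varepsilon^{(K+1)},\,h^M\circ g_\varepsilon^{(K+1)}\}$, obtaining an
\[
N_0\leq B\!\left(\tfrac{\varepsilon}{4},g_\varepsilon^{(K+2)},\max\{2 g_\varepsilon^{(K+1)},h^M\circ g_\varepsilon^{(K+1)}\}\right)
\]
such that $\beta_{m,n}^l\leq\varepsilon/4$ holds for all $m,n\in[N_0,N_0+g_\varepsilon^{(K+2)}(N_0)]$ and $l\leq\max\{2 g_\varepsilon^{(K+1)}(N_0),\,h^M(g_\varepsilon^{(K+1)}(N_0))\}$. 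Restricting attention to $m=n\in[N_0,N_0+g_\varepsilon^{(K+1)}(N_0)]$ and $l\leq 2 g_\varepsilon^{(K+1)}(N_0)$, this is precisely the starting hypothesis~(\ref{eq:beta-for-Theta}) used in the proof of Proposition~\ref{prop:KobMiy-Lem2}. Running that proof unchanged produces an $N\in[N_0,g_\varepsilon^{(K+1)}(N_0)]$ with $|\theta_m^f-\theta_n^f|<\varepsilon$ for all $m,n\in[N,N+g(N)]$, and the monotonicity of the iterates $g_\varepsilon^{(i)}$ gives $N\leq\Theta^B(\varepsilon,g,h)$.

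It remains to verify that the $\beta$-bound from step one is available at this $N$. Since $g_\varepsilon$ is non-decreasing and satisfies $g_\varepsilon(n)\geq n+g^M(n)\geq n+g(n)$, the bound $N\leq g_\varepsilon^{(K+1)}(N_0)$ yields
\[
N+g(N)\leq g_\varepsilon(N)\leq g_\varepsilon\!\left(g_\varepsilon^{(K+1)}(N_0)\right)=g_\varepsilon^{(K+2)}(N_0),
\]
so that $[N,N+g(N)]\subseteq[N_0,N_0+g_\varepsilon^{(K+2)}(N_0)]$. Similarly $h(N)\leq h^M(g_\varepsilon^{(K+1)}(N_0))$, which lies within the third-argument range allowed in step one. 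Hence $\beta_{m,n}^l\leq\varepsilon/4$ holds for all $m,n\in[N,N+g(N)]$ and $l\leq h(N)$, giving the simultaneous conclusion.

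The main obstacle is bookkeeping: one has to choose the inflated arguments to $B$ so that a single $N_0$ underwrites both the internal $\beta$-bound driving the proposition's recursion and the external $\beta$-bound demanded at the end, while still guaranteeing $N\leq\Theta^B(\varepsilon,g,h)$. Once the containments $N+g(N)\leq g_\varepsilon^{(K+2)}(N_0)$ and $h(N)\leq h^M(g_\varepsilon^{(K+1)}(N_0))$ are checked, no new idea beyond that of Proposition~\ref{prop:KobMiy-Lem2} is required.
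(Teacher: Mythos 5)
Your argument is correct and coincides with the paper's own proof: both invoke Lemma~\ref{lem:KobMiy-Lem1} with the inflated parameters $\varepsilon/4$, $g_\varepsilon^{(K+2)}$, $\max\{2\cdot g_\varepsilon^{(K+1)},h^M\circ g_\varepsilon^{(K+1)}\}$, rerun the proof of Proposition~\ref{prop:KobMiy-Lem2} from the resulting $N_0$ to get $N\in[N_0,g_\varepsilon^{(K+1)}(N_0)]$, and finish with the same containments $N+g(N)\leq g_\varepsilon(N)\leq g_\varepsilon^{(K+2)}(N_0)$ and $h(N)\leq h^M\circ g_\varepsilon^{(K+1)}(N_0)$. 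No gaps.
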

\begin{proof}
From Lemma~\ref{lem:KobMiy-Lem1} we obtain a number
\begin{equation*}
N_0\leq B\left(\frac{\varepsilon}{4},g_\varepsilon^{(K+2)},\max\left\{2\cdot g_\varepsilon^{(K+1)},h^M\circ g_\varepsilon^{(K+1)}\right\}\right)
\end{equation*}
such that $\beta_{m,n}^l\leq\varepsilon/4$ holds whenever we have $m,n\in[N_0,N_0+g_\varepsilon^{(K+2)}(N_0)]$ as well as $l\leq 2\cdot g_\varepsilon^{(K+1)}(N_0)$ or $l\leq h^M\circ g_\varepsilon^{(K+1)}(N_0)$. The proof of Proposition~\ref{prop:KobMiy-Lem2} yields an $N\in[N_0,g_\varepsilon^{(K+1)}(N_0)]$ with $|\theta_m^f-\theta_n^f|<\varepsilon$ for all~$m,n\in[N,N+g(N)]$. As before we see $N\leq\Theta^B(\varepsilon,g,h)$. In view of $h(N)\leq h^M\circ g_\varepsilon^{(K+1)}(N_0)$ and
\begin{gather*}
N+g(N)\leq g_\varepsilon(N)\leq g_\varepsilon^{(K+2)}(N_0),
\end{gather*}
all the desired inequalities $\beta_{m,n}^k\leq\varepsilon/4$ are available as well.
\end{proof}

In their proof of~\cite[Lemma~3]{KobMiy}, Kobayasi and Miyadera show that $(S_nT^nx)$ is a Cauchy sequence. We now provide a rate of metastability.

\begin{definition}\label{def:metastab-Delta}
For $\varepsilon>0$ and $g:\mathbb N\to\mathbb N$ we set
\begin{equation*}
g_\varepsilon'(n):=\ceil*{\frac{6b\cdot(n+g(n))}{\delta(\varepsilon)}}\quad\text{with}\quad\delta(\varepsilon):=\min\left\{b,\frac{\varepsilon}{4},\frac{\varepsilon}{8}\cdot\eta\left(\frac{\varepsilon}{2b}\right)\right\}.
\end{equation*}
We then define
\begin{equation*}
\Delta(\varepsilon,g):=\Theta^B(\delta(\varepsilon),g+g_\varepsilon',\operatorname{Id}+g+2\cdot g_\varepsilon'),
\end{equation*}
with $\operatorname{Id}(n)=n$ and for $\delta(\varepsilon)$ as above.
\end{definition}

As promised, we have the following:

\begin{proposition}\label{prop:Delta}
For any $\varepsilon>0$ and $g:\mathbb N\to\mathbb N$ there is an~$N\leq\Delta(\varepsilon,g)$ such that we have $\norm{S_mT^mx-S_nT^nx}<\varepsilon$ for all $m,n\in[N,N+g(N)]$.
\end{proposition}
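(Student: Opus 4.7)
The plan is to apply Corollary~\ref{cor:Theta^B} with suitably enlarged parameters and derive a contradiction via uniform convexity, following the spirit of the proof of Lemma~3 in~\cite{KobMiy}.

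Concretely, fix a fixed point $f = T f \in C$, which is available by our standing assumptions. Apply Corollary~\ref{cor:Theta^B} with the input triple $(\delta(\varepsilon),\, g + g'_\varepsilon,\, \operatorname{Id} + g + 2 g'_\varepsilon)$. This yields an $N \leq \Theta^B(\ldots) = \Delta(\varepsilon, g)$ such that
\[
|\theta_m^f - \theta_n^f| < \delta(\varepsilon) \quad\text{and}\quad \beta_{m,n}^l \le \delta(\varepsilon)/4
\]
hold for all $m, n \in [N,\,N + g(N) + g'_\varepsilon(N)]$ and $l \le N + g(N) + 2 g'_\varepsilon(N)$. In particular, both inequalities remain valid on the shorter interval $[N,\,N + g(N)]$ in which we seek the Cauchy-type bound; the extra room $g'_\varepsilon(N) \ge 6 b (N + g(N))/\delta(\varepsilon)$ is reserved for an averaging construction.

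Now argue by contradiction. Suppose $\|u - v\| \ge \varepsilon$ for some $u = S_m T^m x$ and $v = S_n T^n x$ with $m, n \in [N,\, N + g(N)]$, and set $z = (u + v)/2$. Since $\|u - f\|,\,\|v - f\| \le b$, the uniform convexity of $X$, together with the monotonicity property~(\ref{eq:eta-convex}) of $\eta$ (which guarantees $r \cdot \eta(\varepsilon / r) \ge \varepsilon \cdot \eta(\varepsilon / (2 b))$ whenever $\varepsilon/2 \le r \le b$), gives
\[
\|z - f\| \le \max\bigl(\|u - f\|,\,\|v - f\|\bigr) - \varepsilon \cdot \eta\bigl(\varepsilon/(2 b)\bigr) \le \min(\theta_m^f, \theta_n^f) + \delta(\varepsilon) - 8\,\delta(\varepsilon),
\]
using $\delta(\varepsilon) \le (\varepsilon/8)\,\eta(\varepsilon/(2 b))$. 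Hence $\|z - f\| \le \min(\theta_m^f,\, \theta_n^f) - 7\,\delta(\varepsilon)$.

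To close the argument one needs a matching lower estimate $\|z - f\| \ge \min(\theta_m^f,\, \theta_n^f) - O(\delta(\varepsilon))$. The natural route is the following: by nonexpansivity, $\|V_L - f\| \le \|z - f\|$ for $V_L := \frac{1}{L} \sum_{l=0}^{L-1} T^l z$ with $L := g'_\varepsilon(N)$. On the other hand, the approximation $T^l z \approx (S_m T^{l+m} x + S_n T^{l+n} x)/2$ (valid within $\beta_{m,n}^l \le \delta(\varepsilon)/4$) lets us rewrite $V_L$ as a weighted average of iterates $T^k x$; a careful rearrangement identifies it, up to edge corrections of relative size $(m+n)/L \le \delta(\varepsilon)/(3 b)$, with a convex combination of two Ces\`aro means of the form $S_K T^K x$ for $K$ in the metastable interval, so that their distances to~$f$ are within $\delta(\varepsilon)$ of $\min(\theta_m^f, \theta_n^f)$. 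The main obstacle lies exactly at this last step: since a midpoint can in principle be much closer to $f$ than its two endpoints, extracting the matching lower estimate on $\|z - f\|$ requires combining the Ces\`aro-mean identification with the $\theta^f$-metastability so that the edge-effect bookkeeping (calibrated by the constant~$6b$ inside~$g'_\varepsilon$) and the $\beta$-errors all fit inside the $\delta(\varepsilon)$-budget afforded by the uniform convexity gap.
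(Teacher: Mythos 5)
Your setup is the right one and matches the paper: fix $f=Tf$, invoke Corollary~\ref{cor:Theta^B} with the parameters $(\delta(\varepsilon),g+g'_\varepsilon,\operatorname{Id}+g+2g'_\varepsilon)$ to get the simultaneous $\theta^f$- and $\beta$-metastability on the enlarged window, and aim to exploit uniform convexity, for which an upper bound on $\norm{z-f}$ (your uniform-convexity step) must be played against a matching lower bound. But the proof is not complete: the lower bound $\norm{z-f}\geq\min(\theta^f_m,\theta^f_n)-O(\delta(\varepsilon))$ is exactly the crux, and you explicitly leave it as ``the main obstacle'' rather than proving it. Your worry is legitimate as stated --- if one only knows that $\frac1L\sum_{l<L}T^lz$ is close to a convex combination of \emph{two different} Ces\`aro means $S_{K_1}T^{K_1}x$ and $S_{K_2}T^{K_2}x$, then knowing each has distance about $\theta^f$ from $f$ gives no lower bound on the distance of their midpoint, so the argument as sketched does not close.

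The missing idea, which is how the paper resolves precisely this point, is a shift trick that makes the two Ces\`aro means at the comparison index essentially coincide. Put $y_k:=S_kT^kx-f$, take $m\leq n$ in $[N,N+g(N)]$, and set $k:=\ceil{6nb/\delta(\varepsilon)}\leq g'_\varepsilon(N)$, $K:=m+k$. The elementary estimate $\norm{y_{l+1}-y_l}\leq 2b/l$ (hence $\norm{y_{l+i}-y_l}\leq 2bi/l$) gives $\norm{y_{n+k}-y_{m+k}}\leq 2bn/k\leq\delta(\varepsilon)/3$, so the reverse triangle inequality yields $\norm{y_{m+k}+y_{n+k}}\geq 2\theta^f_K-\delta(\varepsilon)$: the ``midpoint of two endpoints'' problem disappears because the two endpoints at the shifted index are $\delta(\varepsilon)/3$-close. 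On the other hand, the inequality from the proof of Lemma~3 in \cite{KobMiy},
\begin{equation*}
\norm{y_{m+k}+y_{n+k}}\leq\norm{y_m+y_n}+\frac{3n}{m+k}\cdot b+\frac{2}{m+k}\cdot\sum_{i=0}^{m+k-1}\beta_{m,n}^{k+i},
\end{equation*}
is where your $V_L$-idea (nonexpansivity of iterated averages applied to $z$, plus the $\beta_{m,n}^l$-bounds, with indices $k+i\leq N+g(N)+2g'_\varepsilon(N)$ inside the metastable window) actually gets used; together with the choice of $k$ this gives $\norm{y_{m+k}+y_{n+k}}\leq\norm{y_m+y_n}+\delta(\varepsilon)$, hence $\norm{\frac{y_m+y_n}{2}}\geq\theta^f_K-\delta(\varepsilon)$, which is the lower bound you were missing (with $\theta^f_K$ within $\delta(\varepsilon)$ of $\theta^f_m,\theta^f_n$ by the metastability on the enlarged interval). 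One further point: the uniform-convexity comparison needs a case distinction (in the paper, $\theta^f_K\leq\varepsilon/4$ versus $\theta^f_K>\varepsilon/4$), since when $\theta^f_K$ is small the convexity gap is too small to be useful and one instead concludes directly from $\norm{y_m},\norm{y_n}<\theta^f_K+\delta(\varepsilon)\leq\varepsilon/2$; your contradiction framing can absorb this, but only once the lower bound above is actually established.
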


\begin{proof}
Pick a fixed point $f=Tf\in C\subseteq B_{b/2}$, as justified by the standing assumptions from the introduction. In the conclusion of the proposition, we can replace both expressions~$S_kT^kx$ by $y_k:=S_kT^kx-f$. Note that we get $\norm{y_k}=\theta_k^f$ in the notation from above. Corollary~\ref{cor:Theta^B} yields an~$N\leq\Delta(\varepsilon,g)$ with
\begin{equation}\def\arraystretch{1.5}\label{eq:sim-metastab-for-Delta}
\left.
\begin{array}{r}
|\theta_m^f-\theta_n^f|<\delta(\varepsilon)\\ \text{and }\beta_{m,n}^l\leq\frac{\delta(\varepsilon)}{4}
\end{array}\right\}
\quad\text{when}\quad
\left\{
\begin{array}{l}
m,n\in[N,N+g(N)+g'_\varepsilon(N)]\\ \text{and }l\leq N+g(N)+2\cdot g'_\varepsilon(N).
\end{array}\right.
\end{equation}
We must show $\norm{y_m-y_n}<\varepsilon$ for arbitrary~$m,n\in[N,N+g(N)]$, say with $m\leq n$. For $\delta(\varepsilon)$ as in Definition~\ref{def:metastab-Delta}, we put
\begin{equation*}
k:=\ceil*{\frac{6n\cdot b}{\delta(\varepsilon)}}\leq g_\varepsilon'(N)\quad\text{and}\quad K:=m+k\leq N+g(N)+g_\varepsilon'(N).
\end{equation*}
First assume that we have $\theta_K^f\leq\varepsilon/4$. Using~(\ref{eq:sim-metastab-for-Delta}), we then get
\begin{equation*}
\norm{y_m-y_n}\leq\norm{y_m}+\norm{y_n}=\theta^f_m+\theta^f_n<2\cdot(\theta^f_K+\delta(\varepsilon))\leq\varepsilon.
\end{equation*}
From now on we assume $\theta^f_K>\varepsilon/4$, which entails
\begin{equation}\label{eq:theta-times-to-plus}
(\theta^f_K+\delta(\varepsilon))\cdot\left(1-\eta\left(\frac{\varepsilon}{2b}\right)\right)<\theta_K^f-\delta(\varepsilon).
\end{equation}
Normalize $y_m$ and $y_n$ by dividing through $\theta^f_K+\delta(\varepsilon)$, and then apply the contrapositive of~(\ref{eq:modulus-unif-conv}) with $\varepsilon/(2b)$ at the place of~$\varepsilon$. Due to $\theta_K^f,\delta(\varepsilon)\leq b$ this yields
\begin{equation*}
\norm{\frac{y_m+y_n}{2}}>(\theta^f_K+\delta(\varepsilon))\cdot\left(1-\eta\left(\frac{\varepsilon}{2b}\right)\right)\,\,\Rightarrow\,\,\norm{y_m-y_n}<(\theta^f_K+\delta(\varepsilon))\cdot\frac{\varepsilon}{2b}\leq\varepsilon.
\end{equation*}
In view of~(\ref{eq:theta-times-to-plus}) it remains to show that we have $\norm{(y_m+y_n)/2}\geq\theta_K^f-\delta(\varepsilon)$. As in the proof of~\cite[Lemma~3]{KobMiy}, we first observe that $\norm{y_{l+1}-y_l}$ tends to zero: Since our standing assumption $C\subseteq B_{b/2}$ ensures $\norm{T^lx}\leq b/2$, we have
\begin{align*}
\norm{y_{l+1}-y_l}&=\norm{\left(\frac{1}{l}-\frac{1}{l(l+1)}\right)\cdot\sum_{i=1}^{l+1} T^{l+i}x-\frac{1}{l}\cdot\sum_{i=0}^{l-1} T^{l+i}x}\leq{}\\
{}&\leq\frac{1}{l}\cdot\norm{T^{2l+1}x+T^{2l}x-T^lx}+\frac{1}{l(l+1)}\cdot\sum_{i=1}^{l+1}\norm{T^{l+i}x}\leq\frac{2b}{l}.
\end{align*}
Summing up yields $\norm{y_{l+i}-y_l}\leq 2b\cdot i/l$. Applied to $l=m+k=K$ and $i=n-m$, we obtain
\begin{equation*}
\norm{y_{n+k}-y_K}\leq 2b\cdot\frac{n-m}{m+k}\leq 2b\cdot\frac{n}{k}\leq\frac{\delta(\varepsilon)}{3}\leq\delta(\varepsilon).
\end{equation*}
We can now use the reverse triangle inequality to infer
\begin{equation}\label{eq:reverse-triangle-delta}
\norm{y_{m+k}+y_{n+k}}\geq 2\cdot\norm{y_K}-\norm{y_{n+k}-y_K}\geq 2\cdot\theta_K^f-\delta(\varepsilon).
\end{equation}
On the other hand, the proof of~\cite[Lemma~3]{KobMiy} shows
\begin{equation*}
\norm{y_{m+k}+y_{n+k}}\leq\norm{y_m+y_n}+\frac{3n}{m+k}\cdot b+\frac{2}{m+k}\cdot\sum_{i=0}^{m+k-1}\beta_{m,n}^{k+i}.
\end{equation*}
For $i\leq m+k-1$ we have $k+i\leq m+2k\leq N+g(N)+2\cdot g'_\varepsilon(N)$. By~(\ref{eq:sim-metastab-for-Delta}) and the choice of~$k$ we can thus infer $\norm{y_{m+k}+y_{n+k}}\leq\norm{y_m+y_n}+\delta(\varepsilon)$. Together with inequality~(\ref{eq:reverse-triangle-delta}) we can conclude
\begin{equation*}
\norm{y_m+y_n}\geq\norm{y_{m+k}+y_{n+k}}-\delta(\varepsilon)\geq 2\cdot(\theta^f_K-\delta(\varepsilon)),
\end{equation*}
as needed to establish the open claim.
\end{proof}

We have just analysed the proof that $(S_nT^nx)$ is a Cauchy sequence. The limit~$y$ of this sequence is a fixed point of~$T$, as shown by Kobayasi and Miyadera (still in the proof of~\cite[Lemma~3]{KobMiy}). One might expect that a quantitative version of this result consists in (metastable) bounds on the norms~$\norm{TS_nT^nx-S_nT^nx}$. However, in the proof of~\cite[Theorem~1]{KobMiy}, Kobayasi and Miyadera use that we have $T^ly=y$ for arbitrary~$l\in\mathbb N$ (rather than just for~$l=1$). In order to reflect this fact, we will bound $\norm{T^lS_nT^nx-S_nT^nx}$ with a metastable dependency on~$l$.

\begin{definition}\label{def:Psi}
First extend Definition~\ref{def:metastab-Delta} by setting
\begin{equation*}
\Delta^B(\varepsilon,g,h):=\Theta^B(\delta(\varepsilon),g+g_\varepsilon',\max\{\operatorname{Id}+g+2\cdot g_\varepsilon',h\}).
\end{equation*}
For $\varepsilon>0$ and $g,h:\mathbb N\to\mathbb N$ we now put
\begin{equation*}
\Psi(\varepsilon,g,h):=\Delta^B\left(\frac{\varepsilon}{4},g''_{\varepsilon,h},h\right)\quad\text{with}\quad g''_{\varepsilon,h}(N):=\max\left\{g(N),\ceil*{\frac{4b\cdot h(N)}{\varepsilon}}\right\}.
\end{equation*}
\end{definition}

It is straightforward to see that $\Delta^B$ combines Proposition~\ref{prop:Delta} with Corollary~\ref{cor:Theta^B} (by the proof of the proposition and $\delta(\varepsilon)\leq\varepsilon/4$; cf.~the proof of the corollary):

\begin{corollary}\label{cor:Delta^B}
For any $\varepsilon>0$ and $g,h:\mathbb N\to\mathbb N$ there is an $N\leq\Delta^B(\varepsilon,g,h)$ with $\beta_{m,n}^l\leq\varepsilon/16$ and $\norm{S_mT^m-S_nT^nx}<\varepsilon$ for $m,n\in[N,N+g(N)]$ and $l\leq h(N)$.
\end{corollary}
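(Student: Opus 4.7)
The plan is to reduce to the proof of Proposition~\ref{prop:Delta}, exploiting the fact that $\Delta^B$ differs from $\Delta$ only by enlarging the third argument of $\Theta^B$ from $\operatorname{Id}+g+2\cdot g'_\varepsilon$ to $\max\{\operatorname{Id}+g+2\cdot g'_\varepsilon,\, h\}$. This enlargement extends the metastability window for $\beta^l_{m,n}$ so that it covers $l\leq h(N)$ as well, at no extra cost to the verification of $\norm{S_mT^mx-S_nT^nx}<\varepsilon$.

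First, I would fix a fixed point $f=Tf\in C$ (available by our standing assumptions) and apply Corollary~\ref{cor:Theta^B} with $\delta(\varepsilon)$, $g+g'_\varepsilon$, and $\max\{\operatorname{Id}+g+2\cdot g'_\varepsilon,\, h\}$ in place of $\varepsilon$, $g$, and $h$. This produces an
\begin{equation*}
N\leq\Theta^B\bigl(\delta(\varepsilon),\,g+g'_\varepsilon,\,\max\{\operatorname{Id}+g+2\cdot g'_\varepsilon,h\}\bigr)=\Delta^B(\varepsilon,g,h)
\end{equation*}
such that both $|\theta^f_m-\theta^f_n|<\delta(\varepsilon)$ and $\beta^l_{m,n}\leq\delta(\varepsilon)/4$ hold for all $m,n\in[N,N+g(N)+g'_\varepsilon(N)]$ and all $l\leq\max\{N+g(N)+2\cdot g'_\varepsilon(N),\,h(N)\}$.

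Next I would observe that this $N$ satisfies precisely the two conditions (namely the display labelled~(\ref{eq:sim-metastab-for-Delta})) on which the proof of Proposition~\ref{prop:Delta} relies. Running that proof verbatim for $m,n\in[N,N+g(N)]$ then yields $\norm{S_mT^mx-S_nT^nx}<\varepsilon$ on the desired range. For the second required estimate, restrict to $m,n\in[N,N+g(N)]\subseteq[N,N+g(N)+g'_\varepsilon(N)]$ and $l\leq h(N)\leq\max\{N+g(N)+2\cdot g'_\varepsilon(N),h(N)\}$; the metastability bound from Corollary~\ref{cor:Theta^B} directly gives $\beta^l_{m,n}\leq\delta(\varepsilon)/4\leq\varepsilon/16$, where the final inequality uses $\delta(\varepsilon)\leq\varepsilon/4$ from Definition~\ref{def:metastab-Delta}.

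There is essentially no obstacle here beyond bookkeeping: the only thing to confirm is that the enlarged third argument of $\Theta^B$ inside $\Delta^B$ simultaneously majorises both the window $\operatorname{Id}+g+2\cdot g'_\varepsilon$ needed internally by the proof of Proposition~\ref{prop:Delta} and the externally requested bound $h$. Since we took the pointwise maximum of these two functions, both uses are covered by a single application of Corollary~\ref{cor:Theta^B}, which is exactly the point of the construction.
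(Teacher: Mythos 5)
Your proposal is correct and follows exactly the route the paper intends: the paper's own (one-line) proof is precisely the observation that $\Delta^B$ is $\Theta^B$ applied with the enlarged third argument, so that Corollary~\ref{cor:Theta^B} yields an $N$ satisfying~(\ref{eq:sim-metastab-for-Delta}) together with the extra window $l\leq h(N)$, whence the proof of Proposition~\ref{prop:Delta} runs unchanged and $\delta(\varepsilon)/4\leq\varepsilon/16$ gives the stated bound on $\beta^l_{m,n}$. Your write-up just makes this bookkeeping explicit.
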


We are ready to show that $\Psi$ validates statement~(\ref{eq:SnTn-FP-metastab}) from the introduction:

\begin{theorem}\label{thm:Psi}
For any $\varepsilon>0$ and $g,h:\mathbb N\to\mathbb N$ there is an $N\leq\Psi(\varepsilon,g,h)$ such that we have $\norm{T^lS_nT^nx-S_nT^nx}<\varepsilon$ for all $n\in[N,N+g(N)]$ and $l\leq h(N)$.
\end{theorem}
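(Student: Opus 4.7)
The plan is to apply Corollary~\ref{cor:Delta^B} with the parameters that are already baked into the definition of~$\Psi$, namely $\varepsilon/4$ in place of~$\varepsilon$, $g''_{\varepsilon,h}$ in place of~$g$, and $h$ unchanged. This yields an $N\leq\Delta^B(\varepsilon/4,g''_{\varepsilon,h},h)=\Psi(\varepsilon,g,h)$ such that $\beta_{m',n'}^{l'}\leq\varepsilon/64$ and $\norm{S_{m'}T^{m'}x-S_{n'}T^{n'}x}<\varepsilon/4$ hold whenever $m',n'\in[N,N+g''_{\varepsilon,h}(N)]$ and $l'\leq h(N)$. Now fix $n\in[N,N+g(N)]$ and $l\leq h(N)$, and introduce the ``far" auxiliary index $m:=N+g''_{\varepsilon,h}(N)$. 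Since $g''_{\varepsilon,h}(N)\geq g(N)$, both $n$ and $m$ lie in the metastability interval provided by the corollary.

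The core estimate is a triangle-inequality split of $\norm{T^lS_nT^nx-S_nT^nx}$ through $T^lS_mT^mx$ and $S_mT^mx$, which I expect to yield three summands. Two of them, $\norm{T^lS_nT^nx-T^lS_mT^mx}$ and $\norm{S_mT^mx-S_nT^nx}$, are each bounded by $\norm{S_mT^mx-S_nT^nx}<\varepsilon/4$ (using nonexpansiveness of $T^l$ for the first). For the remaining middle term I insert $S_mT^{l+m}x$ to obtain
\begin{equation*}
\norm{T^lS_mT^mx-S_mT^mx}\leq\beta_m^l+\norm{S_mT^{l+m}x-S_mT^mx}.
\end{equation*}
Here $\beta_m^l=\beta_{m,m}^l\leq\varepsilon/64$ by the choice of~$N$. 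The remaining piece is controlled by a direct sum-shift computation: expanding the two Ces\`aro means and cancelling overlapping indices leaves at most $2l$ iterates $T^jx$, each bounded in norm by $b/2$ thanks to $C\subseteq B_{b/2}$, so $\norm{S_mT^{l+m}x-S_mT^mx}\leq bl/m$.

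The main obstacle—and the entire reason for the particular shape of $g''_{\varepsilon,h}$—is controlling this last term. A na\"ive estimate at the index~$n$ would give only $bl/n$, which is not small when $l$ is allowed to range up to~$h(N)$ and $n$ may be as small as~$N$; this is exactly the issue that forces a ``detour" through a sufficiently large auxiliary index. The definition $g''_{\varepsilon,h}(N):=\max\{g(N),\ceil{4bh(N)/\varepsilon}\}$ is tailored so that $m=N+g''_{\varepsilon,h}(N)\geq 4bh(N)/\varepsilon\geq 4bl/\varepsilon$, giving $bl/m\leq\varepsilon/4$. Summing the three pieces then yields
\begin{equation*}
\norm{T^lS_nT^nx-S_nT^nx}<\tfrac{\varepsilon}{4}+\tfrac{\varepsilon}{64}+\tfrac{\varepsilon}{4}+\tfrac{\varepsilon}{4}<\varepsilon,
\end{equation*}
which is the desired bound. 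The trivial edge case $h(N)=0$ forces $l=0$ and the inequality becomes vacuous, so no separate treatment is needed.
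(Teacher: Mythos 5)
Your proposal is correct and follows essentially the same route as the paper's proof: the same invocation of Corollary~\ref{cor:Delta^B} with parameters $\varepsilon/4$, $g''_{\varepsilon,h}$, $h$, the same detour through the large auxiliary index $m=N+g''_{\varepsilon,h}(N)$, the same insertion of $S_mT^{l+m}x$ with the term $\beta_m^l$, and the same sum-shift bound $bl/m\leq\varepsilon/4$ enforced by the definition of $g''_{\varepsilon,h}$. The only (harmless) difference is bookkeeping: you carry the sharper bound $\beta_m^l\leq\varepsilon/64$ where the paper only uses $\beta_m^l<\varepsilon/4$, so your final sum is $3\varepsilon/4+\varepsilon/64<\varepsilon$ rather than the paper's split $\varepsilon/2+\varepsilon/4+\varepsilon/4$.
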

\begin{proof}
For $g_{\varepsilon,h}''$ as in Definition~\ref{def:Psi}, the corollary yields an $N\leq\Psi(\varepsilon,g,h)$ with
\begin{equation}\def\arraystretch{1.5}\label{eq:metastab-for-Psi}
\left.
\begin{array}{r}
\norm{S_mT^mx-S_nT^nx}<\frac{\varepsilon}{4}\\ \text{and }\beta_n^l<\frac{\varepsilon}{4}
\end{array}\right\}
\quad\text{when}\quad
\left\{
\begin{array}{l}
m,n\in[N,N+g_{\varepsilon,h}''(N)]\\ \text{and }l\leq h(N).
\end{array}\right.
\end{equation}
To establish the conclusion of the theorem, we consider arbitrary~$n\in[N,N+g(N)]$ and~$l\leq h(N)$. Set $m:=N+g_{\varepsilon,h}''(N)$ and observe
\begin{equation*}
\norm{T^lS_mT^mx-T^lS_nT^nx}\leq\norm{S_mT^mx-S_nT^nx}<\frac{\varepsilon}{4}.
\end{equation*}
Using the triangle inequality, we can conclude
\begin{equation*}
\norm{T^lS_nT^nx-S_nT^nx}\leq\norm{T^lS_mT^mx-S_mT^mx}+\frac{\varepsilon}{2}.
\end{equation*}
In other words, we have reduced the claim for~$n$ to the claim for the given~$m$ (at the cost of a smaller~$\varepsilon$). The point is that~$m$ is large, so that $S_mT^mx$ is a better approximation of the fixed point $y=Ty$ that is considered in Kobayasi and Miyadera's proof of~\cite[Lemma~3]{KobMiy}. Also by the triangle inequality, we get
\begin{equation*}
\norm{T^lS_mT^mx-S_mT^mx}\leq\norm{S_mT^{l+m}x-S_mT^mx}+\beta_m^l.
\end{equation*}
Due to cancellations between the {C}es\`aro sums, we have
\begin{equation*}
\norm{S_mT^{l+m}x-S_mT^mx}\leq\frac{1}{m}\cdot\left(\sum_{i=m}^{m+l-1}\norm{T^{m+i}x}+\sum_{i=0}^{l-1}\norm{T^{m+i}x}\right).
\end{equation*}
By the standing assumption that $T:C\to C$ has domain $C\subseteq B_{b/2}$, this entails
\begin{equation*}
\norm{S_mT^{l+m}x-S_mT^mx}\leq\frac{l\cdot b}{m}\leq\frac{h(N)\cdot b}{g_{\varepsilon,h}''(N)}\leq\frac{\varepsilon}{4}.
\end{equation*}
Given that (\ref{eq:metastab-for-Psi}) provides $\beta_m^l<\varepsilon/4$, we can combine the previous inequalities to obtain $\norm{T^lS_nT^nx-S_nT^nx}<\varepsilon$, as desired.
\end{proof}

In order to obtain the simultaneous bounds from Corollaries~\ref{cor:Theta^B} and~\ref{cor:Delta^B}, we have modified $\Theta$ and $\Delta$ into $\Theta^B$ and $\Delta^B$, respectively. In the case of~$\Psi$, we get a simultaneous bound without additional modifications, by the proof of Theorem~\ref{thm:Psi}:

\begin{corollary}\label{cor:Psi-simult}
For any $\varepsilon>0$ and $g,h:\mathbb N\to\mathbb N$ there is an $N\leq\Psi(\varepsilon,g,h)$ such that $\norm{T^lS_nT^nx-S_nT^nx}<\varepsilon$ and $\norm{S_mT^mx-S_nT^nx}<\varepsilon/4$ and $\beta_n^l<\varepsilon/4$ hold for all $m,n\in[N,N+g(N)]$ and $l\leq h(N)$.
\end{corollary}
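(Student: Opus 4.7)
The plan is simply to observe that the very number $N$ produced in the proof of Theorem~\ref{thm:Psi} already witnesses all three inequalities, so no new extraction is needed. Concretely, I would start from the application of Corollary~\ref{cor:Delta^B} to the parameters $\varepsilon/4$, $g''_{\varepsilon,h}$ and $h$ that is made to define $\Psi(\varepsilon,g,h)=\Delta^B(\varepsilon/4,g''_{\varepsilon,h},h)$. This yields some $N\leq\Psi(\varepsilon,g,h)$ such that the implication in~(\ref{eq:metastab-for-Psi}) holds, namely $\|S_mT^mx-S_nT^nx\|<\varepsilon/4$ and $\beta_n^l<\varepsilon/4$ (in fact $\beta_{m,n}^l\leq\varepsilon/64$, which is stronger than needed) for all $m,n\in[N,N+g''_{\varepsilon,h}(N)]$ and $l\leq h(N)$.

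The next step is to note the containment of intervals: by Definition~\ref{def:Psi} we have $g''_{\varepsilon,h}(N)\geq g(N)$, so $[N,N+g(N)]\subseteq[N,N+g''_{\varepsilon,h}(N)]$. Restricting $m,n$ to the smaller interval, the two bounds $\|S_mT^mx-S_nT^nx\|<\varepsilon/4$ and $\beta_n^l<\varepsilon/4$ therefore hold for exactly the parameter ranges required by the corollary. No extraction beyond the one already performed inside $\Psi$ is necessary.

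It remains to verify the asymptotic-regularity bound $\|T^lS_nT^nx-S_nT^nx\|<\varepsilon$ for the same $N$. Here I would reproduce the calculation from the proof of Theorem~\ref{thm:Psi} verbatim: set $m:=N+g''_{\varepsilon,h}(N)$ (which still lies in the larger metastability interval), use the triangle inequality via
\[
\|T^lS_nT^nx-S_nT^nx\|\leq 2\cdot\|S_mT^mx-S_nT^nx\|+\|S_mT^{l+m}x-S_mT^mx\|+\beta_m^l,
\]
bound the first summand by $\varepsilon/2$ via (\ref{eq:metastab-for-Psi}), bound the middle Ces\`aro tail by $l\cdot b/m\leq h(N)b/g''_{\varepsilon,h}(N)\leq\varepsilon/4$ by the choice of $g''_{\varepsilon,h}$, and bound $\beta_m^l<\varepsilon/4$ again by~(\ref{eq:metastab-for-Psi}). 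Summing yields the required inequality $<\varepsilon$.

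There is no genuine obstacle here; the only subtlety is the bookkeeping step of confirming that the enlarged interval $[N,N+g''_{\varepsilon,h}(N)]$ built into $\Psi$ is precisely what lets a single $N$ carry both the ``auxiliary'' inequalities (used internally in the proof of Theorem~\ref{thm:Psi}) and the ``target'' inequality, with all three restricted to the user-supplied interval $[N,N+g(N)]$.
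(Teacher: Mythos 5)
Your proposal is correct and is essentially the paper's own argument: the paper derives Corollary~\ref{cor:Psi-simult} precisely by observing that the $N$ produced inside the proof of Theorem~\ref{thm:Psi} (via Corollary~\ref{cor:Delta^B} with parameters $\varepsilon/4$, $g''_{\varepsilon,h}$, $h$) already carries the two auxiliary bounds on the larger interval $[N,N+g''_{\varepsilon,h}(N)]\supseteq[N,N+g(N)]$ while the same computation yields $\norm{T^lS_nT^nx-S_nT^nx}<\varepsilon$. Your bookkeeping (including the sharper $\beta_{m,n}^l\leq\varepsilon/64$ and the bound $h(N)b/g''_{\varepsilon,h}(N)\leq\varepsilon/4$) matches the intended proof.
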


Finally, we can specify the map~$\Phi$ that validates~(\ref{eq:SnTk-univ-metastab}) from the introduction:

\begin{definition}\label{def:Phi}
First, let the map $(\varepsilon,n)\mapsto N_\varepsilon(n)$ be given by
\begin{equation*}
N_\varepsilon(n):=\max\left\{n,\ceil*{\frac{6n\cdot b}{\varepsilon}}\right\}.
\end{equation*}
For $\varepsilon>0$ and $g,h:\mathbb N\to\mathbb N$ we now set
\begin{equation*}
\Phi(\varepsilon,g,h):=N_\varepsilon\left(\Psi\left(\frac{\varepsilon}{2},g',h'\right)\right)\quad\text{with}\quad
\left\{
\begin{aligned}
g'(n)&:=N_\varepsilon(n)+g\left(N_\varepsilon(n)\right),\\[1ex]
h'(n)&:=g(n)+h\left(N_\varepsilon(n)\right).
\end{aligned}\right.
\end{equation*}
\end{definition}

As promised, we get the following:

\begin{theorem}\label{thm:Phi}
For any $\varepsilon>0$ and $g,h:\mathbb N\to\mathbb N$ there is an $N\leq\Phi(\varepsilon,g,h)$ such that $\norm{S_mT^mx-S_nT^kx}<\varepsilon$ holds for all $m,n\in[N,N+g(N)]$ and $k\leq h(N)$.
\end{theorem}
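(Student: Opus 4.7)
The plan has three stages. First, invoke Corollary~\ref{cor:Psi-simult} at parameter $\varepsilon/2$ with the shifted inputs $g',h'$ from Definition~\ref{def:Phi}, extracting an index $N^* \leq \Psi(\varepsilon/2,g',h')$ for which the three bounds
\begin{equation*}
\norm{T^l S_{n'} T^{n'} x - S_{n'} T^{n'} x} < \tfrac{\varepsilon}{2},\quad \norm{S_{m'} T^{m'} x - S_{n'} T^{n'} x} < \tfrac{\varepsilon}{8},\quad \beta^l_{n'} < \tfrac{\varepsilon}{8}
\end{equation*}
hold simultaneously for $m',n' \in [N^*, N^* + g'(N^*)]$ and $l \leq h'(N^*)$. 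Setting $N := N_\varepsilon(N^*)$ gives $N \leq \Phi(\varepsilon,g,h)$ by the monotonicity of $N_\varepsilon$ and $\Psi$. The shift functions $g'(n) = N_\varepsilon(n) + g(N_\varepsilon(n))$ and $h'(n) = g(n) + h(N_\varepsilon(n))$ are designed precisely so that every $m,n \in [N, N+g(N)]$ sits inside $[N^*, N^* + g'(N^*)]$ and every $k \leq h(N)$ sits below $h'(N^*)$; this is a routine arithmetic check using $N \geq N^*$ and $h'(N^*) \geq h(N_\varepsilon(N^*)) = h(N)$.

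Second, fix $m,n \in [N, N+g(N)]$ and $k \leq h(N)$ and apply the three simultaneous bounds with $l := k$. The triangle inequality yields
\begin{align*}
\norm{S_m T^m x - S_n T^{k+n} x} &\leq \norm{S_m T^m x - S_n T^n x} + \norm{S_n T^n x - T^k S_n T^n x}\\
&\qquad {}+ \norm{T^k S_n T^n x - S_n T^{k+n} x} < \tfrac{\varepsilon}{8} + \tfrac{\varepsilon}{2} + \tfrac{\varepsilon}{8} = \tfrac{3\varepsilon}{4}.
\end{align*}

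Third, bound $\norm{S_n T^{k+n} x - S_n T^k x}$ by $\varepsilon/4$. Here the multiplicative inflation $N \geq \ceil*{6 N^* b/\varepsilon}$ built into $N_\varepsilon$ is crucial, since it provides $N^*/n \leq \varepsilon/(6b)$ and so makes the error from a single splitting step of size $(N^*/n)\cdot b$ at most $\varepsilon/6$. The plan is to apply the Cesàro identity
\begin{equation*}
S_n T^j x = \frac{N^*}{n}\, S_{N^*} T^j x + \frac{n - N^*}{n}\, S_{n - N^*} T^{j + N^*} x\quad (j \in \mathbb N)
\end{equation*}
for $j = k$ and $j = k+n$, take the difference, and iterate, so that $\norm{S_n T^{k+n} x - S_n T^k x}$ is reduced to weighted averages of differences $\norm{S_{N^*} T^{\mathrm{sh}} x - S_{N^*} T^{\mathrm{sh}'} x}$, each of which is then brought back to the anchor $S_{N^*} T^{N^*} x$ via the first-stage bounds on approximate fixed points and on the $\beta$-errors; adding the resulting $\varepsilon/4$ to the $3\varepsilon/4$ of the second stage closes the proof.

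The main obstacle is this last stage: a shift by $n$ in the starting index of a length-$n$ Cesàro mean is, term by term, not small, so one must genuinely exploit the approximate-fixed-point character of the sequence $(S_{n'} T^{n'} x)$ together with the small ratio $N^*/n$ to extract the $\varepsilon/4$ saving. This is the only place where the multiplicative (rather than merely additive) form of $N_\varepsilon$ enters, and it is precisely what forces one to invoke $\Psi$ not at scale $N$ itself but at the smaller scale $N^*$ with the $N_\varepsilon$-inflated bounds $g',h'$.
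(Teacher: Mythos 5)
Your stages 1--2 are fine and coincide with the paper's setup: the instance of Corollary~\ref{cor:Psi-simult} at $\varepsilon/2$ with $g',h'$, the choice $N=N_\varepsilon(N^*)$, and the bound $\norm{S_mT^mx-S_nT^{k+n}x}\leq\norm{S_mT^mx-S_nT^nx}+\norm{S_nT^nx-T^kS_nT^nx}+\beta_n^k<3\varepsilon/4$ are all correct. The genuine gap is stage 3. With the data you extracted, the only way to control a shifted short mean is through the anchor: $\norm{S_{N^*}T^{N^*+l}x-S_{N^*}T^{N^*}x}\leq\beta_{N^*}^l+\norm{T^lS_{N^*}T^{N^*}x-S_{N^*}T^{N^*}x}<\varepsilon/8+\varepsilon/2=5\varepsilon/8$, so a difference of two shifted short means can only be bounded by about $5\varepsilon/4$, and a weighted average of such differences (plus the residual block of weight at most $N^*/n$) cannot come out below the claimed $\varepsilon/4$; your total is then roughly $3\varepsilon/4+5\varepsilon/4+\varepsilon/6>2\varepsilon$, not $\varepsilon$. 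You cannot repair this by shrinking the first-stage tolerance, because the theorem must hold for the specific $\Phi$ of Definition~\ref{def:Phi}, which calls $\Psi$ at $\varepsilon/2$ and nothing smaller. (In addition, the shifts your iteration produces go up to about $k+2n-N^*$, and you would still have to check that these are admissible under the bound $h'(N^*)$; this is not addressed.)

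The paper avoids exactly this obstacle by never forming the difference of two length-$n$ means whose starting points differ by $n$. Instead it invokes the inequality from the proof of Kobayasi--Miyadera's Theorem~1 (displayed as~(\ref{eq:Psi-intermed})), which writes $S_nT^kx$ directly, up to a boundary error $\frac{3M}{2n}\cdot b$ with $M=N^*$, as an average of the \emph{short} means $S_MT^{k+i}x$, and compares each of these \emph{once} to $S_mT^mx$ through the anchor: $\norm{S_MT^{k+i}x-S_mT^mx}\leq\beta_M^l+\norm{T^lS_MT^Mx-S_MT^Mx}+\norm{S_MT^Mx-S_mT^mx}<3\varepsilon/4$. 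The multiplicative inflation in $N_\varepsilon$ is spent solely on the boundary term $\frac{3M}{2n}\cdot b\leq\varepsilon/4$, giving $3\varepsilon/4+\varepsilon/4=\varepsilon$. Your detour through $S_nT^{k+n}x$ forces the anchor to be paid twice, which the fixed budget cannot absorb; to close the proof you should replace stage 3 (and in fact stages 2--3) by this single averaging decomposition of $S_nT^kx$ itself.
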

\begin{proof}
Use Corollary~\ref{cor:Psi-simult} to find an~$M\leq\Psi\left(\frac{\varepsilon}{2},g',h'\right)$ with
\begin{equation}\def\arraystretch{1.5}\label{eq:metastab-for-Phi}
\left.
\begin{array}{r}
\norm{T^lS_nT^nx-S_nT^nx}<\frac{\varepsilon}{2}\\
\text{and }\norm{S_mT^mx-S_nT^nx}<\frac{\varepsilon}{8}\\
\text{and }\beta_n^l<\frac{\varepsilon}{8}
\end{array}\right\}
\quad\text{when}\quad
\left\{
\begin{array}{l}
m,n\in[M,M+g'(M)]\\
\text{and }l\leq h'(M),
\end{array}\right.
\end{equation}
for $g'$ and $h'$ as in Definition~\ref{def:Phi}. To establish the conclusion of the theorem for $N:=N_\varepsilon(M)\leq\Phi(\varepsilon,g,h)$, consider arbitrary $m,n\in[N,N+g(N)]$ and $k\leq h(N)$. In view of $n\geq N\geq M$, the proof of~\cite[Theorem~1]{KobMiy} yields
\begin{equation}\label{eq:Psi-intermed}
\norm{S_nT^kx-S_mT^mx}\leq\frac{3M}{2n}\cdot b+\frac{1}{n}\cdot\sum_{i=M}^{n-1}\norm{S_MT^{k+i}x-S_mT^mx}.
\end{equation}
For each $i\in[M,n-1]$ we can write $k+i=M+l$ with $l\leq h'(M)$. Using the triangle inequality and~(\ref{eq:metastab-for-Phi}), it follows that $\norm{S_MT^{k+i}x-S_mT^mx}$ is smaller than
\begin{equation*}
\beta_M^l+\norm{T^lS_MT^Mx-S_MT^Mx}+\norm{S_MT^Mx-S_mT^mx}\leq\frac34\cdot\varepsilon.
\end{equation*}
By~(\ref{eq:Psi-intermed}) and the definition of~$N=N_\varepsilon(M)$, we can conclude
\begin{equation*}
\norm{S_nT^kx-S_mT^mx}<\frac{3M}{2\cdot N_\varepsilon(M)}\cdot b+\frac34\cdot\varepsilon\leq\varepsilon,
\end{equation*}
just as the theorem claims.
\end{proof}

Finally, we record a simultaneous rate of metastability for statements~(\ref{eq:SnTk-univ-conv}-\ref{eq:S_nx-asymptotic-reg}) from the introduction. As explained in the paragraph after statement~(\ref{eq:SnTk-univ-metastab}),  the original result of Kobayasi and Miyadera~\cite{KobMiy} (stated as Theorem~\ref{thm:KobMiy} above) can be recovered as an immediate consequence of the following.

\begin{corollary}\label{cor:simultaneous-1to3}
For any $\varepsilon>0$ and $g,h:\mathbb N\to\mathbb N$ there is an $N\leq\Phi(2\varepsilon/5,g,h)$ such that we have
\begin{equation*}
\max\left\{\norm{S_mT^mx-S_nT^kx},\norm{T^lS_nT^nx-S_nT^nx},\norm{T^lS_nT^kx-S_nT^kx}\right\}<\varepsilon
\end{equation*}
for all $m,n\in[N,N+g(N)]$ and $k,l\leq h(N)$.
\end{corollary}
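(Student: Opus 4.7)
The plan is to apply Theorem~\ref{thm:Phi} at parameter $2\varepsilon/5$, to harvest simultaneously the auxiliary bound on $\norm{T^l S_n T^n x - S_n T^n x}$ that is already produced inside its proof (via Corollary~\ref{cor:Psi-simult}), and then to derive the third norm estimate as a triangle-inequality consequence of the first two. The factor $5/2$ is pinned down in the introduction: from any two bounds $\norm{S_m T^m x - S_n T^k x} < \alpha$ and $\norm{T^l S_n T^n x - S_n T^n x} < \beta$ valid on a common window of metastability, one derives $\norm{T^l S_n T^k x - S_n T^k x} < 2\alpha + \beta$, and choosing $\alpha = 2\varepsilon/5$, $\beta = \varepsilon/5$ gives exactly~$\varepsilon$.

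Concretely, let $N \leq \Phi(2\varepsilon/5, g, h)$ be the integer produced by Theorem~\ref{thm:Phi}. Unfolding Definition~\ref{def:Phi}, this $N$ is constructed via an intermediate $M \leq \Psi(\varepsilon/5, g', h')$ and has the form $N = N_{2\varepsilon/5}(M) \geq M$. Applying Corollary~\ref{cor:Psi-simult} at $M$ with parameter $\varepsilon/5$ yields $\norm{T^l S_n T^n x - S_n T^n x} < \varepsilon/5$ for all $n \in [M, M+g'(M)]$ and $l \leq h'(M)$. Since $M + g'(M) = M + N + g(N) \geq N + g(N)$ and $h'(M) = g(M) + h(N) \geq h(N)$, this bound is valid on the entire window $[N, N+g(N)] \times \{0,\ldots,h(N)\}$ required by the corollary being proved. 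Theorem~\ref{thm:Phi} itself delivers $\norm{S_m T^m x - S_n T^k x} < 2\varepsilon/5$ on the same window, handling the first two terms of the maximum.

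For the remaining term, fix $n \in [N, N+g(N)]$ and $k, l \leq h(N)$ and estimate
\begin{equation*}
\norm{T^l S_n T^k x - S_n T^k x} \leq \norm{T^l S_n T^k x - T^l S_n T^n x} + \norm{T^l S_n T^n x - S_n T^n x} + \norm{S_n T^n x - S_n T^k x}.
\end{equation*}
By nonexpansiveness of $T^l$, the first summand is at most $\norm{S_n T^k x - S_n T^n x}$, which is an instance of the Theorem~\ref{thm:Phi} bound at $m = n$ and so is strictly less than $2\varepsilon/5$; the third summand is the same quantity, again strictly less than $2\varepsilon/5$; the middle summand is strictly less than $\varepsilon/5$ by the previous paragraph. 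Summing, $\norm{T^l S_n T^k x - S_n T^k x} < 4\varepsilon/5 + \varepsilon/5 = \varepsilon$, as required.

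The only real obstacle is bookkeeping: confirming that the window of metastability delivered by Corollary~\ref{cor:Psi-simult} at the intermediate integer $M$ still contains the window demanded by the corollary at $N = N_{2\varepsilon/5}(M)$. This works out because the padding already built into the definitions of $g'$ and $h'$ in Definition~\ref{def:Phi} is generous enough to absorb the shift from $M$ to $N$.
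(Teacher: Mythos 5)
Your proposal is correct and follows essentially the same route as the paper: unfold the proof of Theorem~\ref{thm:Phi} at parameter $2\varepsilon/5$ so that Corollary~\ref{cor:Psi-simult} simultaneously supplies $\norm{T^lS_nT^nx-S_nT^nx}<\varepsilon/5$ on the window $[N,N+g(N)]$, $l\leq h(N)$ (your bookkeeping with $g'$, $h'$ matches what the paper uses implicitly), and then conclude by the triangle inequality and nonexpansiveness with $2\cdot(2\varepsilon/5)+\varepsilon/5=\varepsilon$. The only cosmetic difference is that you anchor the final estimate at $S_nT^nx$ (taking $m=n$ in the $\Phi$-bound) while the paper anchors at $S_NT^Nx$; both yield the same constants.
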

\begin{proof}
By the proof of Theorem~\ref{thm:Phi} we obtain an $N\leq\Phi(2\varepsilon/5,g,h)$ such that we have $\norm{S_mT^mx-S_nT^kx}<2\varepsilon/5$ and $\norm{T^lS_nT^nx-S_nT^nx}<\varepsilon/5$ for all numbers $m,n\in[N,N+g(N)]$ and $k,l\leq h(N)$. Given that~$T$ is nonexpansive, we get
\begin{equation*}
\norm{T^lS_nT^k-S_nT^k}\leq 2\cdot\norm{S_nT^kx-S_NT^Nx}+\norm{T^lS_NT^Nx-S_NT^Nx}<\varepsilon,
\end{equation*}
as desired.
\end{proof}

\section{Bounds on asymptotic isometry}\label{sect:asymp-isom-Hilbert}

Theorem~\ref{thm:KobMiy} (due to Kobayasi and Miyadera) involves the condition that $T$ is asymptotically isometric on~$\{x\}$, or explicitly: that the values $\alpha^i_n=\norm{T^nx-T^{n+i}x}$ converge for $n\to\infty$ uniformly in~$i\in\mathbb N$. On the quantitative side, this corresponds to one of our standing assumptions: throughout the previous sections, we have assumed that we are given a map $(\varepsilon,g,h)\mapsto A(\varepsilon,g,h)$ that validates statement~(\ref{eq:metastab-alpha}). In the introduction, we have mentioned three cases in which a suitable~$A$ can be constructed: The first case (where $T$ is asymptotically regular and satisfies Witt\-mann's condition) is covered in a previous paper by the second author~\cite{kohlenbach-odd-operators}. Two further constructions in different cases are provided in the present section. Note that each construction yields a version of Theorems~\ref{thm:Psi} and~\ref{thm:Phi} in which $\Phi$ and $\Psi$ do no longer depend on~$A$ (but possibly on new data such as $\Gamma$ in~(\ref{eq:modulus-total-bound}) below).

For the first part of this section, we consider a nonexpansive map $T:C\to C$ on a subset~$C$ of a Hilbert space. If~$T$ is odd on~$C=-C\ni 0$, then we clearly have
\begin{equation}\label{eq:Wittmann-condition}
T0=0\in C\quad\text{and}\quad\norm{Tx+Ty}\leq\norm{x+y}\text{ for all }x,y\in C.
\end{equation}
In the following we do not assume that~$T$ is odd but do require that it satisfies (\ref{eq:Wittmann-condition}). This condition has been studied by Wittmann~\cite{wittmann90} and plays an important role in several applications of proof mining (cf.~\cite{kohlenbach-odd-operators,kohlenbach-banach-geodesic-2016,safarik12}). Most standing assumptions from the introduction are not needed for the following, but we still assume $C\subseteq B_{b/2}$ (actually it suffices here to assume that 
$\| x\|\le \frac{b}{2}$).

Our quantitative analysis is based on the following result, which is implicit in the proof of~\cite[Theorem~2]{Brezis-Browder76} (cf.~also \cite[Theorem~2.3]{Bruck78}). Let us point out that the sequence of norms $\norm{T^nx}$ is non-increasing, since $T$ is nonexpansive with $T0=0$. We have already seen that the sequence $(\alpha^i_n)$ is non-increasing for each~$i\in\mathbb N$.

\begin{lemma}\label{lem:asymp-isom}
We have $(\alpha^i_m)^2-(\alpha^i_{m+k})^2\leq 4\cdot\left(\norm{T^mx}^2-\norm{T^{m+k+i}x}^2\right)$.
\end{lemma}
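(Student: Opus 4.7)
The plan is to exploit the Hilbert space structure via the parallelogram identity, combined with Wittmann's condition~(\ref{eq:Wittmann-condition}) and the monotonicity of $(\norm{T^n x})_n$.

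First I would set $a:=T^m x$ and $b:=T^{m+i}x$, so that $\alpha^i_m=\norm{a-b}$ and $\alpha^i_{m+k}=\norm{T^k a-T^k b}$. The parallelogram identity in the Hilbert space gives
\begin{equation*}
\norm{a-b}^2+\norm{a+b}^2=2\norm{a}^2+2\norm{b}^2\quad\text{and}\quad\norm{T^k a-T^k b}^2+\norm{T^k a+T^k b}^2=2\norm{T^k a}^2+2\norm{T^k b}^2.
\end{equation*}
Subtracting these and rearranging yields an exact expression for $(\alpha^i_m)^2-(\alpha^i_{m+k})^2$ in terms of the four squared norms and the difference $\norm{T^k a+T^k b}^2-\norm{a+b}^2$.

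The key step is then to apply Wittmann's condition $\norm{Tu+Tv}\leq\norm{u+v}$ iteratively: writing $T^k$ as $k$-fold composition and peeling off one $T$ at a time gives $\norm{T^k a+T^k b}\leq\norm{a+b}$, so the problematic sum-term contributes a non-positive quantity and can be dropped. This leaves the inequality
\begin{equation*}
(\alpha^i_m)^2-(\alpha^i_{m+k})^2\leq 2\norm{T^m x}^2+2\norm{T^{m+i}x}^2-2\norm{T^{m+k}x}^2-2\norm{T^{m+k+i}x}^2.
\end{equation*}

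Finally I would close with the monotonicity of $(\norm{T^n x})$, which holds because $T$ is nonexpansive with the fixed point~$0$ (so that $\norm{T^{n+1}x}=\norm{T^{n+1}x-T^{n+1}0}\leq\norm{T^n x-T^n 0}=\norm{T^n x}$). This gives $\norm{T^{m+i}x}\leq\norm{T^m x}$ and $\norm{T^{m+k}x}\geq\norm{T^{m+k+i}x}$, which substituted into the previous display yields exactly the desired bound $4(\norm{T^m x}^2-\norm{T^{m+k+i}x}^2)$. There is no real obstacle here; the only care required is to verify that the iteration of Wittmann's condition is legitimate (which it is, because each step only requires $\norm{T u+Tv}\leq\norm{u+v}$ applied to $u=T^{j}a$, $v=T^j b$ that remain in $C$).
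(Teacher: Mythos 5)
Your argument is correct and is essentially the paper's own proof in a slightly repackaged form: the parallelogram identity plays the role of the binomial expansions, the iterated Wittmann inequality $\norm{T^k a+T^k b}\leq\norm{a+b}$ is the same key step, and the final use of the monotonicity of $(\norm{T^n x})$ coincides with the paper's. No gaps.
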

\begin{proof}
Using binomial expansion and the fact that $(\norm{T^nx})$ is non-increasing, 
we learn that $(\alpha^i_m)^2-(\alpha^i_{m+k})^2$ is bounded by
\begin{equation*}
2\cdot\left(\norm{T^mx}^2-\norm{T^{m+k+i}x}^2+\langle T^{m+k}x,T^{m+k+i}x\rangle-\langle T^mx,T^{m+i}x\rangle\right).
\end{equation*}
Hence the claim reduces to
\begin{equation*}
\langle T^{m+k}x,T^{m+k+i}x\rangle-\langle T^mx,T^{m+i}x\rangle\leq\norm{T^mx}^2-\norm{T^{m+k+i}x}^2.
\end{equation*}
To obtain the latter, iterate~(\ref{eq:Wittmann-condition}) to get $\norm{T^{m+k}x+T^{m+k+i}x}^2\leq\norm{T^mx+T^{m+i}x}^2$. Now consider binomial expansions, and use that $(\norm{T^nx})$ is non-increasing.
\end{proof}

In view of the standing assumption $C\subseteq B_{b/2}$, the values $\norm{T^nx}^2$ form a non-increasing sequence of reals in~$[0,b^2/4]$. Let us recall the known rate of metastability for such sequences. Given $g:\mathbb N\to\mathbb N$, we define $\widetilde g:\mathbb N\to\mathbb N$ by $\widetilde g(n):=n+g(n)$. The iterates $\widetilde g^i:\mathbb N\to\mathbb N$ are given by the recursive clauses
\begin{equation*}
\widetilde g^0(n):=n\quad\text{and}\quad\widetilde g^{i+1}(n):=\widetilde g(\widetilde g^i(n)).
\end{equation*}
By~\cite[Proposition~2.27]{kohlenbach-book} (cf.~also the beginning of Section~\ref{sect:fixed-points} above) we have
\begin{multline}\label{eq:metastab-non-increasing}
\left|\norm{T^mx}^2-\norm{T^nx}^2\right|<\varepsilon\quad\text{for some }N\leq\widetilde g^{\ceil*{b^2/(4\varepsilon)}}(0)\\ \text{ and all }m,n\in[N,N+g(N)],
\end{multline}
for any~$\varepsilon>0$ and $g:\mathbb N\to\mathbb N$. We will see that statement~(\ref{eq:metastab-alpha}) from the introduction holds with the following map $A_1$ at the place of~$A$.

\begin{definition}\label{def:A_1}
For $\varepsilon>0$ and $g,h:\mathbb N\to\mathbb N$ we set
\begin{equation*}
A_1(\varepsilon,g,h):=\widetilde{g+h}^{\ceil*{b^2/\varepsilon^2}}(0).
\end{equation*}
\end{definition}

As promised, we obtain the following:

\begin{proposition}
Consider a nonexpansive map $T:C\to C$ on a set $C\subseteq B_{b/2}$ in Hilbert space. If~$T$ satisfies~(\ref{eq:Wittmann-condition}), then any $\varepsilon>0$ and $g,h:\mathbb N\to\mathbb N$ admit a number $N\leq A_1(\varepsilon,g,h)$ with $|\alpha^i_m-\alpha^i_n|<\varepsilon$ for all $m,n\in[N,N+g(N)]$ and $i\leq h(N)$.
\end{proposition}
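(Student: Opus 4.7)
The plan is to combine Lemma~\ref{lem:asymp-isom} with the rate of metastability~(\ref{eq:metastab-non-increasing}) for the non-increasing sequence~$(\norm{T^nx}^2)$. Recall that this sequence takes values in $[0,b^2/4]$, since $C\subseteq B_{b/2}$ and $T0=0$ together with nonexpansiveness forces $\norm{T^nx}$ to be non-increasing.

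First I would reduce the passage from a bound on squares to a bound on the original quantities. Since $\alpha^i_m\geq\alpha^i_{m+1}\geq 0$ (as remarked before Lemma~\ref{lem:asymp-isom}), we may assume $m\leq n$ and write $n=m+k$ with $k\geq 0$, so that $|\alpha^i_m-\alpha^i_n|=\alpha^i_m-\alpha^i_{m+k}$. I would then record the elementary observation that if $a\geq b\geq 0$ and $a^2-b^2<\varepsilon^2$, then $a-b<\varepsilon$: either $a<\varepsilon$ (in which case $a-b\leq a<\varepsilon$), or $a\geq\varepsilon$ (in which case $a+b\geq\varepsilon$ yields $a-b<\varepsilon^2/\varepsilon=\varepsilon$).

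Second, I would apply~(\ref{eq:metastab-non-increasing}) with $g'(n):=g(n)+h(n)$ and tolerance $\varepsilon'=\varepsilon^2/4$. Since $b^2/(4\varepsilon')=b^2/\varepsilon^2$, this yields some
\begin{equation*}
N\leq\widetilde{g+h}^{\ceil*{b^2/\varepsilon^2}}(0)=A_1(\varepsilon,g,h)
\end{equation*}
with $\bigl|\norm{T^px}^2-\norm{T^qx}^2\bigr|<\varepsilon^2/4$ for all $p,q\in[N,N+g(N)+h(N)]$.

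Finally, for arbitrary $m,n=m+k\in[N,N+g(N)]$ and $i\leq h(N)$, both indices $m$ and $m+k+i$ lie in $[N,N+g(N)+h(N)]$, so the above yields $\norm{T^mx}^2-\norm{T^{m+k+i}x}^2<\varepsilon^2/4$. By Lemma~\ref{lem:asymp-isom} we obtain
\begin{equation*}
(\alpha^i_m)^2-(\alpha^i_{m+k})^2\leq 4\cdot\left(\norm{T^mx}^2-\norm{T^{m+k+i}x}^2\right)<\varepsilon^2,
\end{equation*}
and the elementary observation from the first step gives $|\alpha^i_m-\alpha^i_n|<\varepsilon$, as required. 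The only subtle point is bookkeeping the shift by $i$, which is handled by packing $h$ into $g'$; Lemma~\ref{lem:asymp-isom} has already done all the analytical work, so there is no genuine obstacle.
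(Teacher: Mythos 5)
Your proposal is correct and follows essentially the same route as the paper: apply the metastability rate~(\ref{eq:metastab-non-increasing}) with tolerance $\varepsilon^2/4$ and the function $g+h$, then feed the resulting bound into Lemma~\ref{lem:asymp-isom}. The only cosmetic difference is the final step from $(\alpha^i_m)^2-(\alpha^i_n)^2<\varepsilon^2$ to $\alpha^i_m-\alpha^i_n<\varepsilon$, which you handle by a case distinction whereas the paper uses the inequality $(\alpha^i_m-\alpha^i_n)^2\leq(\alpha^i_m)^2-(\alpha^i_n)^2$ directly; both are fine.
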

\begin{proof}
By (\ref{eq:metastab-non-increasing}) with $\varepsilon^2/4$ and $g+h$ at the place of $\varepsilon$ and $g$, respectively, we find a number $N\leq A_1(\varepsilon,g,h)$ with
\begin{equation*}
\left|\norm{T^mx}^2-\norm{T^lx}^2\right|<\frac{\varepsilon^2}{4}\quad\text{for all }m,l\in[N,N+g(N)+h(N)].
\end{equation*}
Given $m\leq n$ in $[N,N+g(N)]$ and $i\leq h(N)$, combine this inequality for $l=n+i$ with Lemma~\ref{lem:asymp-isom} for $k=n-m$. This yields
\begin{equation*}
(\alpha^i_m-\alpha^i_n)^2\leq(\alpha^i_m)^2-(\alpha^i_n)^2\leq 4\cdot\left(\norm{T^mx}^2-\norm{T^{n+i}x}^2\right)<\varepsilon^2
\end{equation*}
and hence $0\leq\alpha^i_m-\alpha^i_n<\varepsilon$, as desired.
\end{proof}

For the second part of this section, we return to the case of a Banach space. Our aim is to satisfy~(\ref{eq:metastab-alpha}) when $(T^nx)$ has a convergent subsequence. The latter entails that there are $M,N\in\mathbb N$ as in the following lemma. We point out that the lemma is a quantitative version of a step in Bruck's proof of~\cite[Theorem~2.4]{Bruck78}. Also note that the lemma establishes (\ref{eq:metastab-alpha}) whenever we have $N\leq A(\varepsilon,g,h)=A(\varepsilon,g)$, independently of the function $h:\mathbb N\to\mathbb N$ that provides the bound $i\leq h(N)$ in~(\ref{eq:metastab-alpha}).

\begin{lemma}\label{lem:isom-from-subseq}
For $\varepsilon>0$ and $g:\mathbb N\to\mathbb N$, assume that we are given $M,N\in\mathbb N$ with $M\geq N+g(N)$ and $\norm{T^Mx-T^Nx}<\varepsilon/2$. We then have $|\alpha_m^i-\alpha^i_n|<\varepsilon$ for all numbers $m,n\in[N,N+g(N)]$ and $i\in\mathbb N$.
\end{lemma}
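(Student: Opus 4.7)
The plan is to exploit the fact, already noted in the introduction, that each sequence $(\alpha^i_n)_n$ is non-increasing in $n$ (because $T$ is nonexpansive). This monotonicity sandwiches all values $\alpha^i_m, \alpha^i_n$ with $m,n\in[N,N+g(N)]$ between $\alpha^i_N$ (the largest) and $\alpha^i_M$ (the smallest, since $M\geq N+g(N)$). So it suffices to show that $\alpha^i_N - \alpha^i_M < \varepsilon$ for every~$i\in\mathbb N$.

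To establish this bound, I would first observe that nonexpansivity of $T$, applied $i$ times, yields
\begin{equation*}
\norm{T^{M+i}x-T^{N+i}x}\leq\norm{T^Mx-T^Nx}<\frac{\varepsilon}{2}.
\end{equation*}
Combined with the hypothesis $\norm{T^Mx-T^Nx}<\varepsilon/2$, the triangle inequality gives
\begin{equation*}
\alpha^i_N=\norm{T^Nx-T^{N+i}x}\leq\norm{T^Nx-T^Mx}+\norm{T^Mx-T^{M+i}x}+\norm{T^{M+i}x-T^{N+i}x}<\alpha^i_M+\varepsilon.
\end{equation*}

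Finally, for arbitrary $m,n\in[N,N+g(N)]$, we may assume (by symmetry) $m\leq n$, so that monotonicity gives $\alpha^i_N\geq\alpha^i_m\geq\alpha^i_n\geq\alpha^i_M$ (using $n\leq N+g(N)\leq M$). Hence
\begin{equation*}
|\alpha^i_m-\alpha^i_n|=\alpha^i_m-\alpha^i_n\leq\alpha^i_N-\alpha^i_M<\varepsilon,
\end{equation*}
as required. There is no real obstacle here: the entire argument is two applications of the triangle inequality together with monotonicity of $(\alpha^i_n)_n$. The only conceptual point is realising that the hypothesis $M\geq N+g(N)$ allows $M$ itself to play the role of an upper endpoint in which $\alpha^i_\cdot$ has already nearly stabilised; this is precisely what converts the single estimate $\norm{T^Mx-T^Nx}<\varepsilon/2$ into uniformity in $i$.
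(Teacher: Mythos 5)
Your proof is correct and coincides with the paper's own argument: the same triangle inequality through $T^Mx$ and $T^{M+i}x$ (using nonexpansivity to propagate $\norm{T^Mx-T^Nx}<\varepsilon/2$ to index $+i$) gives $\alpha^i_N<\alpha^i_M+\varepsilon$, and monotonicity of $(\alpha^i_n)_n$ finishes the claim. You merely spell out the final sandwich step that the paper leaves implicit.
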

\begin{proof}
From $\norm{T^Mx-T^Nx}<\varepsilon/2$ we get $\norm{T^{M+i}x-T^{N+i}x}<\varepsilon/2$, as $T$ is nonexpansive by a standing assumption. We recall $\alpha^i_N=\norm{T^Nx-T^{N+i}x}$ to conclude
\begin{equation*}
\alpha^i_N\leq\norm{T^Nx-T^Mx}+\norm{T^Mx-T^{M+i}x}+\norm{T^{M+i}x-T^{N+i}x}<\varepsilon+\alpha^i_M.
\end{equation*}
The claim follows since $\alpha^i_n$ is decreasing in~$n$ (again because $T$ is nonexpansive).
\end{proof}

The assumption that $(T^nx)$ has a convergent subsequence is satisfied when the domain~$C$ of our map $T:C\to C$ (or just the set $\{T^nx\,|\,n\in\mathbb N\}$) is compact or -- equivalently -- (closed and) totally bounded. On the quantitative side, this last property can be witnessed by a modulus of total boundedness (in the sense of P.~Gerhardy~\cite{gerhardy08}), i.\,e., by a function $\gamma:(0,\infty)\to\mathbb N$ that satisfies the following: for any $\varepsilon>0$ and any sequence $(x_i)$ in~$C$, we have $\norm{x_j-x_i}\leq\varepsilon$ for some $i<j\leq\gamma(\varepsilon)$ (see \cite{kohlenbach-leustean-nicolae_fejer} 
for a metatheorem on uniform bound extractions for spaces given 
with such a modulus). We make an (apparently) weaker assumption, where $\gamma$ may depend on the sequence and only the smaller index is controlled: for the following, we assume that we are given a map $(\varepsilon,g)\mapsto\Gamma(\varepsilon,g)$ that guarantees
\begin{equation}\label{eq:modulus-total-bound}
\norm{T^{g(j)}x-T^{g(i)}x}<\varepsilon\qquad\text{for some $i\leq\Gamma(\varepsilon,g)$ and some $j>i$}.
\end{equation}
Recall the notation $\widetilde g$ and $\widetilde g^{i}$ from the paragraph before Definition~\ref{def:A_1}.

\begin{definition}
For $\varepsilon>0$ and $g:\mathbb N\to\mathbb N$ we put
\begin{equation*}
A_2(\varepsilon,g):=\widetilde g^K(0)\quad\text{with}\quad K:=\Gamma\left(\frac{\varepsilon}{2},l\mapsto\widetilde g^l(0)\right).
\end{equation*}
Here $(\varepsilon,g)\mapsto\Gamma(\varepsilon,g)$ is a given map that satisfies (\ref{eq:modulus-total-bound}).
\end{definition}

To conclude, we show that (\ref{eq:metastab-alpha}) holds with $(\varepsilon,g,h)\mapsto A_2(\varepsilon,g,h):=A_2(\varepsilon,g)$ at the place of~$A$. Given that $A_2$ is independent of~$h:\mathbb N\to\mathbb N$, the condition $i\leq h(N)$ can be dropped in the present situation.

\begin{proposition}
Assume that $T:C\to C$ is a nonexpansive map on a totally bounded subset of a Banach space, where the total boundedness for the sequences 
$(T^{g(n)}x)$ is witnessed by a given map $(\varepsilon,g)\mapsto\Gamma(\varepsilon,g)$ as in~(\ref{eq:modulus-total-bound}). For any $\varepsilon>0$ and $g:\mathbb N\to\mathbb N$ we can then find an $N\leq A_2(\varepsilon,g)$ such that $|\alpha_m^i-\alpha_n^i|<\varepsilon$ holds for all $m,n\in[N,N+g(N)]$ and $i\in\mathbb N$.
\end{proposition}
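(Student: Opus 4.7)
My plan is to combine the given modulus $\Gamma$ of total boundedness with Lemma~\ref{lem:isom-from-subseq}, which reduces the task to producing suitable indices $N \leq M$ along the orbit with $\|T^Mx - T^Nx\| < \varepsilon/2$ and $M \geq N + g(N)$.

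First, observe that the iterates $\widetilde g^l(0)$ form an increasing sequence of natural numbers, since $\widetilde g(n) = n + g(n) \geq n$ and the map $\widetilde g$ is therefore monotone. I would apply the hypothesis (\ref{eq:modulus-total-bound}) to the function $l \mapsto \widetilde g^l(0)$ with tolerance $\varepsilon/2$ in place of~$\varepsilon$. This yields indices $i \leq K := \Gamma(\varepsilon/2, l \mapsto \widetilde g^l(0))$ and $j > i$ such that
\begin{equation*}
\bigl\|T^{\widetilde g^j(0)}x - T^{\widetilde g^i(0)}x\bigr\| < \frac{\varepsilon}{2}.
\end{equation*}
Now set $N := \widetilde g^i(0)$ and $M := \widetilde g^j(0)$.

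The bound $N \leq A_2(\varepsilon,g)$ is immediate from $i \leq K$ together with monotonicity of the iterates: $\widetilde g^i(0) \leq \widetilde g^K(0) = A_2(\varepsilon,g)$. The condition $M \geq N + g(N)$ follows from $j \geq i+1$, since
\begin{equation*}
M = \widetilde g^j(0) \geq \widetilde g^{i+1}(0) = \widetilde g(\widetilde g^i(0)) = \widetilde g(N) = N + g(N),
\end{equation*}
again by monotonicity of $\widetilde g$. With these two conditions verified, Lemma~\ref{lem:isom-from-subseq} directly delivers $|\alpha^i_m - \alpha^i_n| < \varepsilon$ for all $m,n \in [N, N+g(N)]$ and all $i \in \mathbb N$.

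There is no real obstacle here; the whole argument is essentially a bookkeeping exercise assembling the given lemma with the definition of $A_2$. The only subtle point that one must check carefully is that the modulus $\Gamma$ is applied to the correct auxiliary sequence, namely $l \mapsto \widetilde g^l(0)$: this choice is precisely what ensures that consecutive indices $i < j$ returned by $\Gamma$ translate into the required gap $M \geq N + g(N)$ demanded by Lemma~\ref{lem:isom-from-subseq}.
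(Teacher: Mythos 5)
Your proposal is correct and follows essentially the same route as the paper: apply $\Gamma(\varepsilon/2,\,l\mapsto\widetilde g^{\,l}(0))$ to get $i\leq K$ and $j>i$, set $N:=\widetilde g^{\,i}(0)$, $M:=\widetilde g^{\,j}(0)$, and conclude via Lemma~\ref{lem:isom-from-subseq}. One tiny remark: the map $\widetilde g$ itself need not be monotone (that would require $g$ to be monotone); what you actually use, and what suffices, is that the iterate sequence $l\mapsto\widetilde g^{\,l}(0)$ is increasing in~$l$, which follows directly from $\widetilde g(n)=n+g(n)\geq n$.
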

\begin{proof}
Our modulus of total boundedness provides $i\leq\Gamma(\varepsilon/2,l\mapsto\widetilde g^l(0))$ and $j>i$ such that $M:=\widetilde g^j(0)$ and $N:=\widetilde g^i(0)$ satisfy $\norm{T^Mx-T^Nx}<\varepsilon/2$. Note that the function $l\mapsto\widetilde g^l(0)$ is increasing because of $\widetilde g(n)=n+g(n)\geq n$. This allows us, first, to infer $N\leq A_2(\varepsilon,g)$ from the definition of~$A_2$. Secondly, we learn
\begin{equation*}
M=\widetilde g^j(0)\geq\widetilde g^{i+1}(0)=\widetilde g(\widetilde g^i(0))=\widetilde g(N)=N+g(N),
\end{equation*}
so that we can conclude by Lemma~\ref{lem:isom-from-subseq}.
\end{proof}
\begin{remark} 
In the finite dimensional case $X:=\mathbb R^N$ endowed 
with the Euclidean norm, 
a modulus of total boundedness for $B_{b/2}$ can be taken as 
\[ \gamma(\varepsilon):=\left\lceil \frac{\sqrt{N}b}{\varepsilon}\right\rceil, 
\]
see \cite[Example 2.8]{kohlenbach-leustean-nicolae_fejer}, and so we may 
take $\Gamma(\varepsilon,g):=\gamma(\varepsilon/2)$ independently of $g,T,x$.
\end{remark}
\noindent
{\bf Final Comment:} An inspection of the proofs above shows that the quantitative results do not depend on the assumptions of $X$ being complete or $C$ 
being closed. \\[1mm]
\noindent
{\bf Acknowledgment:} The second author was supported by the German Science 
Foundation (DFG KO 1737/6-1 and DFG KO 1737/6-2).

\end{document}